\numberwithin{equation}{section}
\newtheorem{theorem}{Theorem}
\newtheorem{lemma}{Lemma}
\newtheorem{remark}{Remark}
\newtheorem{proposition}{Proposition}
\newtheorem{corollary}{Corollary}
\begin{document}
 \title{Degree complexity of birational maps related to matrix inversion: Symmetric case}
\author{Tuyen Trung Truong}
    \address{Indiana University Bloomington IN 47405}
 \email{truongt@indiana.edu}
\thanks{}
    \date{\today}
    \keywords{Birational Mappings; Degree Complexity; Matrix Inversion; Symmetric Matrices.}
    \subjclass[2000]{37F99, 32H50.}
    \begin{abstract}
For $q\geq 3$, we let $\mathcal{S}_q$ denote the projectivization of the set of symmetric $q\times q$ matrices with coefficients in $\mathbb{C}$. We let
$I(x)=(x_{i,j})^{-1}$ denote the matrix inverse, and we let $J(x)=(x_{i,j}^{-1})$ be the matrix whose entries are the reciprocals of the entries of $x$.
We let $K|\mathcal{S}_q=I\circ J:~\mathcal{S}_q\rightarrow \mathcal{S}_q$ denote the restriction of the composition $I\circ J$ to $\mathcal{S}_q$. This
is a birational map whose properties have attracted some attention in statistical mechanics. In this paper we compute the degree complexity of
$K|\mathcal{S}_q$, thus confirming a conjecture of Angles d'Auriac, Maillard, and Viallet in [J. Phys. A: Math. Gen. 39 (2006), 3641--3654].
\end{abstract}
\maketitle
\section{Introduction}       % Enter section title between curly braces
\label{SecIntroduction}

Fix $q\geq 3$, let $\mathcal{M}_q$ denote the space of $q\times q$ matrices with coefficients in $\mathbb{C}$, and let $\mathbb{P}(\mathcal{M}_q)$ denote
its projectivization. Then the mapping $K:\mathbb{P}(\mathcal{M}_q)\rightarrow \mathbb{P}(\mathcal{M}_q)$ is defined as follows: $K=I\circ J$, where
$J(x)=(x_{i,j}^{-1})$ takes the reciprocal of each entry of the matrix $x=(x_{i,j})$, and $I(x)=(x_{i,j})^{-1}$ is the matrix inverse. The map $K$ is of
interest since it represents a basic symmetry in certain problems of lattice statistical mechanics, and has been studied in
\cite{auriac-maillard-viallet1}, \cite{auriac-maillard-viallet2}, \cite{bedford-kim1}, \cite{bedford-kim2}, \cite{bedford-tuyen},
\cite{bellon-maillard-viallet}, \cite{bellon-viallet}, \cite{boukraa-maillard}, and \cite{preissmann-auriac-maillard}.

The degree complexity of $K$ is the exponential rate of growth of the degrees of its iterates:
\begin{equation}
\delta (K)=\lim _{n\rightarrow\infty}(deg(K^n))^{1/n}. \label{EquationFirstDynamicalDegree}\end{equation}

There are many $K$-invariant subspaces $\mathcal{T}\subset \mathbb{P}(\mathcal{M}_q)$. The first were considered are $\mathcal{S}_q$ (the space of
symmetric matrices), $\mathcal{C}_q$ the cyclic (also called circulant) matrices, and $\mathcal{SC}_q=\mathcal{S}_q\cap \mathcal{C}_q$ (see
\cite{preissmann-auriac-maillard} for more $K$-invariant subspaces of $\mathbb{P}(\mathcal{M}_q)$). In view of complex dynamics, as well as physical
meaning, the map $K$ as well as the restrictions of $K$ to invariant spaces are of interest. One of the basic questions is to determine the degree
complexities $\delta (K|\mathcal{T})$. The values $\delta (K|\mathcal{C}_q)$ were found in \cite{bellon-viallet} and \cite{bedford-kim2}; the values of
$\delta (K|{\mathcal{SC}_q})$ were found in \cite{auriac-maillard-viallet2} for prime $q$'s, and in \cite{bedford-kim2} for general $q$'s. Based on
extensive computations, \cite{auriac-maillard-viallet2} has conjectured that
\begin{equation}
\delta (K|\mathcal{C}_q)=\delta (K)=\delta (K|\mathcal{S}_q) ,\label{EquationConjecture}\end{equation} for all $q$.

In \cite{bedford-tuyen}, we proved that $\delta (K)=\delta (K|\mathcal{C}_q)$. In this paper we prove the remaining conjectured equality.
\begin{theorem}
$\delta (K|\mathcal{S}_q)=\delta (K)=\delta (K|\mathcal{C}_q)$ is the largest modulus of the roots of the polynomial $\lambda ^2-(q^2-4q+2)\lambda
+1$.\label{theorem1}\end{theorem}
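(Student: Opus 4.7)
My plan is to follow the standard recipe for computing the first dynamical degree of a birational map: construct a smooth projective model $\pi: X \to \mathcal{S}_q$ on which the lifted map $\widetilde{K}: X \dashrightarrow X$ is algebraically stable, and then read off $\delta(K|\mathcal{S}_q)$ as the spectral radius of $\widetilde{K}^{*}$ acting on $\mathrm{Pic}(X)\otimes\mathbb{R}$. Since $\delta(K)=\delta(K|\mathcal{C}_q)$ is already available from \cite{bedford-tuyen}, only the equality $\delta(K|\mathcal{S}_q)=\delta(K|\mathcal{C}_q)$ remains, and by the statement the target value is the largest root of $\lambda^{2}-(q^{2}-4q+2)\lambda+1$.

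First I would set up the geometry of $K|\mathcal{S}_q$ as a rational self-map of the projective space $\mathbb{P}^{q(q+1)/2-1}$ by writing $K$ in homogeneous coordinates on $\mathcal{S}_q$, computing its algebraic degree, and stripping off any common polynomial factor that appears when forming $I\circ J$. Then I would enumerate the exceptional hypersurfaces, which fall into two natural families: the coordinate hyperplanes $\{x_{i,j}=0\}$ that are collapsed by $J$, and the components of the determinantal stratification $\{\mathrm{rk}(x)\leq q-2\}$ that are collapsed by $I$. For each such divisor $E$ I would compute the image $K(E\setminus \mathcal{I}(K))$ and determine which of these images lie on $\mathcal{I}(K^{n})$ for some $n\geq 1$, as these intersections are precisely the obstruction to algebraic stability.

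Second, I would construct $X$ by a sequence of blow-ups centered on the problematic subvarieties identified above, chosen so that no irreducible exceptional divisor of $\widetilde{K}$ is mapped into $\mathcal{I}(\widetilde{K}^{n})$ for any $n\geq 1$. Once algebraic stability is verified, $\widetilde{K}^{*}$ acts linearly on the real Picard group, which is spanned by the pullback hyperplane class $H$ together with the exceptional classes $E_{1},\dots,E_{N}$. Computing $\widetilde{K}^{*}H$ and the $\widetilde{K}^{*}E_{j}$ reduces to tracking orders of vanishing of the coordinate formulas for $K$ along each blow-up center. After removing cyclotomic factors arising from permutations of the exceptional components, the characteristic polynomial of this matrix should leave exactly $\lambda^{2}-(q^{2}-4q+2)\lambda+1$ as the factor containing the dominant eigenvalue.

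The main obstacle will be the construction step: pinning down a minimal collection of centers that simultaneously resolves all iterated indeterminacy. Unlike the circulant case, where Fourier diagonalization decomposes $K|\mathcal{C}_{q}$ into a product of one-variable pieces and renders the orbit combinatorics manageable, $\mathcal{S}_q$ admits no such simultaneous diagonalization, so the interaction between the $J$-type exceptional divisors (the coordinate hyperplanes) and the $I$-type exceptional divisor (the symmetric determinantal variety) must be controlled by direct computation. In particular, verifying that the orbit of each exceptional image avoids the indeterminacy locus of all future iterates after blow-up will require a careful case analysis, likely organized according to which coordinate hyperplane one starts from and how many zero entries the resulting symmetric matrix inherits under iteration of $K$.
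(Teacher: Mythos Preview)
Your outline follows the standard recipe, but the paper deliberately deviates from it at the step you flag as the main obstacle. The paper constructs a tower of blow-ups $Z\to\mathcal{S}_q$ (centers: the rank-one locus $R_1$, the coordinate subspaces $A_{i,j}$, and certain intersections $B_{i,i}$, $C_{i,j}$, $D_{i,j}$ inside earlier exceptional divisors), but it does \emph{not} establish algebraic stability for the lift $K_Z$. In fact the exceptional divisors $\mathcal{A}^{i,j}$ with $i\neq j$ remain problematic: their $K_Z$-orbits are not shown to avoid indeterminacy, and the paper does not attempt to resolve them further.

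Instead, the paper circumvents stability altogether. It sets $S=\bigcup_{i\neq j}\mathcal{A}^{i,j}$, $U=Z\setminus S$, and proves (Lemma~\ref{LemmaOrbitsOfExceptionalHypersurfacesOfKZ}, Corollary~\ref{Corollary1}) that the orbit of every exceptional hypersurface \emph{other than} those in $S$ stays out of $\mathcal{I}(K_Z)\cup S$. This is enough to show that, on the quotient $\Lambda=\mathrm{Pic}(Z)/\ker(R_U)$ obtained by killing the classes supported on $S$, the induced maps $L_n$ satisfy $L_n=(L_1)^n$. That yields only a \emph{lower} bound $\delta(K|\mathcal{S}_q)\geq \mathrm{sp}(L_1|\Lambda_0)$, which is computed to be the largest root of $\lambda^2-(q^2-4q+2)\lambda+1$. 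The matching \emph{upper} bound is not obtained on $Z$ at all: it comes for free from the inequality $\delta(K|\mathcal{S}_q)\leq\delta(K)$ together with the value of $\delta(K)$ already established in~\cite{bedford-tuyen} for general matrices.

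So your plan to verify full algebraic stability after blow-up is exactly the step the paper found too delicate and replaced by the quotient-plus-sandwich argument. If you pursue your route you would need to control the orbits of the $\mathcal{A}^{i,j}$ ($i\neq j$) under $K_Z$, which the paper does not do; conversely, the paper's approach buys a much shorter orbit analysis at the cost of relying on the previously computed $\delta(K)$ for the upper bound.
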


The proof of Theorem \ref{theorem1} is similar to the proofs for other cases (general matrices, $\mathcal{C}_q$, $\mathcal{SC}_q$) in that we repeatedly
blowup subvarieties to construct a space $Z\rightarrow \mathbb{P}(\mathcal{S}_q)$, and we conclude by showing that $\delta (K)$ equals the spectral
radius $sp(K_Z^*)$ of the pullback operator $K_Z^*:Pic(Z)\rightarrow Pic(Z)$ for the lifted map $K_Z:Z\rightarrow Z$. However, the behavior of singular
orbits is much more complicated for the symmetric case that we consider here. Let us give a brief comparison of these proofs in the following.

The computations of $\delta (K|\mathcal{C}_q)$ and $\delta (K|{\mathcal{SC}_q})$ can be reduced to computations of $\delta (F)$ where $F=L\circ J$ for
appropriate linear maps $L$. It was shown in \cite{bedford-kim1} (respectively \cite{bedford-kim2}) that after a finite series of blowups $Z\rightarrow
\mathcal{C}_q$ (respectively $Z\rightarrow \mathcal{SC}_q$), the induced maps $F_Z$ on $Z$ is algebraic stable, i.e. satisfy
\begin{equation}
(F_Z^n)^*=(F_Z^*)^n, \label{Equation11Regular}\end{equation} for all $n\in \mathbb{N}$, as linear maps on $Pic (Z)$. It follows (see for example
\cite{fornaess-sibony}) that $\delta (F)$ is the spectral radius $sp(F_Z^*)$ of $F_Z^*$.

For the case of general matrices, we constructed in \cite{bedford-tuyen} a space $Z$ for which $sp(K_Z^*)=\delta (K|\mathcal{C}_q)$. This immediately
implies $\delta (K)=sp(K_Z^*)=\delta (K|\mathcal{C}_q)$.  (Remark: The same argument as that of the proof of Lemma
\ref{LemmaOrbitsOfExceptionalHypersurfacesOfKZ} below shows that in fact the map $K_Z$ in \cite{bedford-tuyen} satisfies condition
(\ref{Equation11Regular}), thus gives another proof to the cited result in \cite{bedford-tuyen}.)

For the proof of Theorem \ref{theorem1} in this paper, we will construct a space $Z$ via a construction which is similar to, but more complicated than,
the one in \cite{bedford-tuyen}. Although we do not prove (\ref{Equation11Regular}), we show that $\delta (K|\mathcal{S}_q)=\delta (K)=\delta
(K|\mathcal{C}_q)$ are all equal to the spectral radius of $K_Z^*$. The results that allow us to circumvent (\ref{Equation11Regular}) in this case are
Proposition \ref{Theorem11RegularOnOpenSet} and Theorem \ref{TheoremLowerBoundForLambda1}.

This paper is organized as follows: In Section 2, we give some basic properties of the map $K|\mathcal{S}_q$. In Section 3 we construct a space $Z$ by a
series of blowups starting from $\mathcal{S}_q$. In Section 4 we explore the behavior of the iterates of the map $K_Z$ on the exceptional hypersurfaces,
and obtain a lower bound for $\delta (K|\mathcal{S}_q)$. In Section 5 we show that the lower bound is equal to the largest modulus of the roots of the
polynomial $\lambda ^2-(q^2-4q+2)\lambda +1$, thus complete the proof of Theorem \ref{theorem1}.

{\bf Acknowledgement.} The author would like to thank Professor Eric Bedford for introducing the topic of this paper, and for his constant help and
encouragement in the course of this project. The author also would like to thank the referee for many helpful comments that helped to improve the paper.

\section{Basic properties of the map ${K}$}
By \cite{bedford-tuyen}, we know that $1\leq \delta (K|\mathcal{S}_q)\leq \delta (K)\leq 1$ for $q=2,3,4$, so in the sequel we will assume that $q\geq
5$. For convenience we will use the simple notation ${K}$ for $K|\mathcal{S}_q$.

First, we introduce some notation that will be helpful in the course of the proof of Theorem \ref{theorem1}. Most of the notation used here have a
counterpart for the case of general matrices, which was used in \cite{bedford-tuyen}.

For $1 \leq j\leq q-1$, define $R_j$ to be the set of matrices in $\mathcal{S}_q$ of rank less than or equal to $j$. Elements of $R_1$, the symmetric
matrices of rank $1$, may be represented as $\nu \otimes \nu =(\nu _i\nu _j)_{1\leq i,j\leq q}$ for $\nu =(\nu _1,\ldots ,\nu _q)\in \mathbb{C}^q$. In
particular, $R_1$ is a smooth subvariety of $\mathcal{S}_q$.

For $i,j=1,\ldots ,q$ denote:
\begin{eqnarray*}
\Sigma _{i,j}=\{x=(x_{k,l})\in S_q:~x_{i,j}=0\},
\end{eqnarray*}
and define
\begin{eqnarray*}
A _{i,j}=\bigcap _{k=i\mbox{ or }l=j}\Sigma _{k,l}.
\end{eqnarray*}
Thus $\Sigma _{i,j}$ is the set of symmetric matrices whose $(i,j)$-th entry is zero, and $A_{i,j}$ is the set of symmetric matrices whose $i$-th and
$j$-th rows and columns are zero. In particular, $A_{i,j}=A_{i,i}\cap A_{j,j}$ for all $1\leq i,j\leq q$. This leads to a difficulty that does not arise
in the non-symmetric case.

We summarize some properties of the map ${K}$ in the following proposition
\begin{proposition}
a) The exceptional hypersurfaces of ${K}$ are $JR_{q-1}$ and $\Sigma _{i,j}$'s.

b) The indeterminacy locus ${K}$ is contained in the set
$$JR_{q-2}\cup \bigcup _{(i,j)\not= (k,l)}(\Sigma _{i,j}\cap \Sigma _{k,l}).$$

c) $deg({K})=q^2-q+1$.
 \label{PropositionBasicPropertiesOfK}\end{proposition}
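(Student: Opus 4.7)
Since $K=I\circ J$ is a composition, my plan is to analyze $J$ and $I$ separately and combine. Throughout I work in projective coordinates on $\mathcal{S}_q\cong\mathbb{P}^{N-1}$ with $N=q(q+1)/2$, writing $J$ as $[y_{ij}]\mapsto[P/y_{ij}]$ where $P=\prod_{i\leq j}y_{ij}$.

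For part (a), I would use that an irreducible hypersurface is exceptional for $K$ precisely when it is exceptional for $J$ or is sent by $J$ into the exceptional locus of $I$. On each $\Sigma_{i,j}$ every component of the projective formula for $J$ contains $y_{i,j}$ as a factor except the one indexed by $(i,j)$, so $\Sigma_{i,j}$ is contracted to a single point and is therefore exceptional. The exceptional hypersurface of the matrix inverse $I$ is the determinantal hypersurface $R_{q-1}=\{\det=0\}$, since on it the adjugate has rank at most one. Because $J$ is an involution, the corresponding contribution to the exceptional locus of $K$ is $J^{-1}(R_{q-1})=JR_{q-1}$, and I would rule out further contractions by checking that the Jacobian of $K$ has maximal rank on the complement of the listed hypersurfaces.

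For part (b), I identify separately the indeterminacy loci of $J$ and of $I$ and pull back. The projective formula for $J$ has all $N$ components vanishing simultaneously precisely when two distinct coordinates $y_{ij}$ are zero, which gives $\bigcup_{(i,j)\neq(k,l)}\Sigma_{i,j}\cap\Sigma_{k,l}$. The indeterminacy locus of $I$ is the common zero locus of all $(q-1)\times(q-1)$ minors, namely $R_{q-2}$; its $J$-preimage is $JR_{q-2}$. The union of these two contributions contains the indeterminacy locus of $K$.

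For part (c), I write $K(y)=\mathrm{adj}(J(y))/\det(J(y))$ and clear denominators by multiplying by the monomial $M=\prod_i y_{ii}\prod_{i<j}y_{ij}^2$, which has degree $q^2$. Each $(i,j)$-cofactor of $J(y)$ expands as a signed sum over bijections $\sigma$ of products of $q-1$ reciprocals $1/y_{k,\sigma(k)}$; after multiplication by $M$ every summand becomes a polynomial monomial of degree $q^2-(q-1)=q^2-q+1$. Hence every component of $M\cdot\mathrm{adj}(J(y))$ is a polynomial of degree at most $q^2-q+1$, bounding $\deg K$ from above. To obtain the matching lower bound I would verify that the greatest common divisor of the $N$ polynomial components is trivial, by exhibiting in two different cofactor entries monomials whose irreducible factorizations are incompatible, so that no non-constant polynomial can divide all components.

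The main technical obstacle lies in part (c), where the symmetry $y_{ij}=y_{ji}$ doubles the exponents of off-diagonal variables in the clearing monomial and creates the possibility of spurious common factors between numerator and denominator; one has to track these exponents carefully to confirm that the naive degree $q^2-q+1$ is actually attained. Parts (a) and (b) are essentially formal once the projective description of $J$ is in hand, and follow the same pattern as the non-symmetric analog treated in \cite{bedford-tuyen}.
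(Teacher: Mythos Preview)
Your treatment of parts (a) and (b) follows the same pattern as the paper, which simply refers to the analogous Propositions 2.1 and 3.1 of \cite{bedford-tuyen} for the non-symmetric case. Your setup for part (c) also matches: the clearing monomial $M=\prod_i y_{ii}\prod_{i<j}y_{ij}^2$ is exactly the paper's $\prod(\widehat y)=\prod_{1\le k,l\le q}\widehat y_{k,l}$, and the degree count $q^2-(q-1)=q^2-q+1$ for each component is the same.

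The gap is in the last step of (c). Saying you would ``exhibit in two different cofactor entries monomials whose irreducible factorizations are incompatible'' does not, by itself, show that the GCD of all components is $1$. Incompatible monomials in $P_1$ and $P_2$ do not prevent a common \emph{non-monomial} irreducible factor; you would still need to rule out, say, determinantal factors coming from $J$ or from minors of $1/\widehat y$. The symmetry issue you flag (off-diagonal variables appearing squared in $M$) is precisely what makes this nontrivial. The paper handles this with a structured argument: it restricts to the diagonal components $\widehat K_{i,i}$ and factors each as $\widehat K_{i,i}=D_i\cdot E_i$, where $E_i=\prod_{(k-i)(l-i)=0}\widehat y_{k,l}$ is the monomial in the $i$-th row/column variables and $D_i=C_{i,i}(1/\widehat y)\prod_{(k-i)(l-i)\ne 0}\widehat y_{k,l}$ is a polynomial independent of those variables and not divisible by any $\widehat y_{k,l}$. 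Then (1) $\gcd(D_i,E_j)=1$ for all $i,j$ since $E_j$ is a monomial; (2) $\gcd(E_1,\dots,E_q)=1$ since the variable sets $S_i=\{\widehat y_{i,1},\dots,\widehat y_{i,q}\}$ have empty intersection; (3) $\gcd(D_1,\dots,D_q)=1$ by the same variable-set argument. These three facts together give $\gcd(\widehat K_{1,1},\dots,\widehat K_{q,q})=1$. Your proposal is on the right track but needs this kind of variable-support argument, not a comparison of individual monomials, to close the gap.
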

\begin{proof}
The proofs of a) and b) are similar to those of Propositions 2.1 and 3.1 in \cite{bedford-tuyen} (see also the results in Section 3 of this paper).

We now proceed to proving c). Regarding $\mathcal{S}_q$ as the projective space $\mathbb{P}^{(q^2+q-2)/2}$, then a point $y\in \mathcal{S}_q$ can be
represented by the homogeneous coordinates  $(y_{i,j},~ 1\leq i\leq j\leq q)$. Then the corresponding matrix in $\mathcal{M}_q$ is the symmetric matrix
$\widehat{y}$ whose entries are $\widehat{y}_{i,j}=y_{i,j}$ for $1\leq i\leq j\leq q$.

It suffices to show that the homogeneous representation $\widehat{K}$ of $K$ is:
\begin{eqnarray*}
\widehat{K}_{i,j}(y)=C_{i,j}(1/\widehat{y})\prod (\widehat{y}),
\end{eqnarray*}
for $1\leq i\leq j\leq q$, where $\prod (\widehat{y}):=\prod _{1\leq i,j\leq q}\widehat{y}_{k,l}$ and $C_{i,j}(1/\widehat{y})$ is the $(i,j)$-cofactor of
the matrix $1/\widehat{y}$. That is, to show that the GCD of all polynomials $\widehat{K}_{i,j}(y)$ (for $1\leq i\leq j\leq q$) is $1$. To this end, it
suffices to show that the GCD of all polynomials $\widehat{K}_{i,i}(y)$ (where $1\leq i\leq q$) is $1$.

Note that the rational function $C_{i,i}(1/\widehat{y})$ does not depend on the variables $\widehat{y}_{i,k}$ and $\widehat{y}_{k,i}$ for $1\leq k\leq
q$. Moreover, since $C_{i,i}(1/\widehat{y})$ is the determinant of the $(q-1)\times (q-1)$ symmetric matrix obtained by deleting the $i$-th row and
$i$-th column from the matrix $1/\widehat{y}$, it is easy to see that
$$D_i(y):=C_{i,i}(1/\widehat{y})\prod _{(k-i)(l-i)\not= 0}\widehat{y}_{k,l}$$
is a polynomial independent of variables $\widehat{y}_{i,k}$ and $\widehat{y}_{k,i}$ for $1\leq k\leq q$, and is not divisible by any of the variables
$\widehat{y}_{k,l}$ where $1\leq k,l\leq q$. Then we have
\begin{eqnarray*}
\widehat{K}_{i,i}(y)=D_{i}(y)E_i(y)
\end{eqnarray*}
where $E_i(y)=\prod _{(k-i)(l-i)=0}\widehat{y}_{k,l}$. Observe that

1). For any $i$ and $j$, $GCD(D_i,E_j)=1$. This is because as noted above, $D_i$ is not divisible by any of the variables $\widehat{y}_{k,l}$, while
$E_j$ is a monomial in these variables.

2). $GCD(E_1,E_2,\ldots ,E_q)=1$. In fact, $E_i$ depends only on the variables in $S_i=\{\widehat{y}_{i,1},\widehat{y}_{i,2},\ldots
,\widehat{y}_{i,q}\}$. Hence if $\phi$ is a divisor of $E_i$, $\phi$ depends only on the variables in $S_i$. Since $\bigcap _{i=1,\ldots
,q}S_i=\emptyset$, it follows that the $GCD(E_1,\ldots ,E_q)$ must be a constant.

3). $GCD(D_1,\ldots ,D_q)=1$. The argument is similar to that of 2).

From 1), 2) and 3), it follows that $GCD(\widehat{K}_{1,1},\widehat{K}_{2,2},\ldots ,\widehat{K}_{q,q})=1$.
\end{proof}
\section{Construction of the space $Z$}

Let us describe the sequence of blowups used to construct $Z$.

A) First we let $\pi _1:Z_1\rightarrow \mathcal{S}_q$ be the blowing up with center $R_1$ and exceptional divisor $\mathcal{R}^1=\pi _1^{-1}(R_1)$. To
give a local coordinate system we fix $2\leq i_0,j_0\leq q$, $1\leq k_0\leq q$. Let $s\in \mathbb{C}$; $v=(v_{i,j})_{2\leq i,j\leq q}\in
\mathcal{S}_{q-1}$ and $v_{i_0,j_0}=1$; $\nu =(\nu _1,\ldots ,\nu _q)\in \mathbb{C}^q$ and $\nu _{k_0}=1$, and $\nu \otimes \nu \in \mathcal{M}_q$ whose
$(i,j)$-th entry is $\nu _i\nu _j$. Without loss of generality, we may assume that $k_0=1$, i.e. $\nu _1=1$. Then, in the local coordinate $(s,v, \nu )$
the projection $\pi _1=\pi _{\mathcal{R}^1}$ is given by
\begin{equation}\pi _{\mathcal{R}^{1}}(s,v,\nu )=\nu
\otimes \nu +s\left (\begin{array}{ll}0 &0 \\0 &v\end{array}\right ).\label{EquationLocalCoordinateForR1}\end{equation} In this local coordinate system,
$\mathcal{R}^{1}=\{s=0\}$.

B) Next we let $\pi _2:Z_2\rightarrow Z_1$ be the blow up of $Z_1$ along the strict transforms of $A_{i,j}$ for all $1\leq i<j\leq q$. The space $Z_2$
depends on the order in which these blowups are performed. But it does not matter for our purpose, the Picard group $Pic(Z_2)$ of $Z_2$ is generated by
$Pic(Z_1)$ and the exceptional divisors $\mathcal{A}^{i,j}=\pi _2^{-1}(A_{i,j})$. The object we will use is $Pic(Z_2)$, which is essentially independent
of the order of blowups. We describe a local coordinate system of $\pi _2$ near the exceptional divisor $\mathcal{A}^{1,2}$. We fix $3\leq i_0,j_0\leq
q$, $1\leq \min \{k_0,l_0\}\leq 2$. Let $s\in \mathbb{C}$; $v=(v_{i,j})_{3\leq i,j\leq q}\in \mathcal{S}_{q-2}$ and $v_{i_0,j_0}=1$;
$$\left (\begin{array}{llll}\zeta _{1,1}&\zeta _{1,2}&\ldots&\zeta _{1,q}\\ \zeta _{2,1}&\zeta _{2,2}&\ldots&\zeta _{2,q}\\ \vdots &\vdots&0_{q-2}& \\ \zeta _{q,1}&\zeta _{q,2}&&\end{array}\right
)=:\left (\begin{array}{lll}\zeta &\zeta &\zeta\\ \zeta &\zeta &\zeta  \\ \zeta &\zeta &0_{q-2}\end{array}\right )\in \mathcal{S}_q,$$ where $0_{q-2}$ is
the $(q-2)\times (q-2)$ zero matrix; $\zeta =(\zeta _{k,l})_{1\leq \min\{k,l\}\leq 2}$, and $\zeta _{k_0,l_0}=1$. In the local coordinate $(s,\zeta , v
)$, the projection $\pi _2=\pi _{\mathcal{A}^{1,2}}$ is given by
\begin{equation}
\pi _{\mathcal{A}^{1,2}}(s,\zeta ,v)=\left (\begin{array}{lll}s\zeta&s\zeta&s\zeta \\s\zeta&s\zeta&s\zeta \\s\zeta&s\zeta&v\end{array}\right
).\label{EquationLocalCoordinateForA12}
\end{equation}
In this local coordinate system, $\mathcal{A}^{1,2}=\{s=0\}$. Local coordinates near other $\mathcal{A}^{i,j}$'s ($i\not= j$) are similarly defined.

C) Next we let $\pi _3:Z_3\rightarrow Z_2$ be the blow up of $Z_2$ along the strict transforms of $A_{i,i}$ for all $1\leq i\leq q$, with exceptional
divisors $\mathcal{A}^{i,i}=\pi _3^{-1}(A_{i,i})$. We describe a local coordinate system of $\pi _2$ near the exceptional divisor $\mathcal{A}^{1,1}$. We
fix $2\leq i_0,j_0\leq q$, $1\leq k_0\leq q$. Let $s\in \mathbb{C}$; $v=(v_{i,j})_{2\leq i,j\leq q}\in \mathcal{S}_{q-1}$ and $v_{i_0,j_0}=1$;
 $\zeta =(\zeta _{k,l})_{\min \{k,l\}=1}$ and $\zeta _{1,k_0}=1$. In the
local coordinate $(s,\zeta , v )$, the projection $\pi _3=\pi _{\mathcal{A}^{1,1}}$ is given by
\begin{equation}
\pi _{\mathcal{A}^{1,1}}(s,\zeta ,v)=\left (\begin{array}{ll}s\zeta &s\zeta \\s\zeta &v\end{array}\right ).\label{EquationLocalCoordinateForA11}
\end{equation}
In this local coordinate system, $\mathcal{A}^{1,1}=\{s=0\}$.

Let $K_{Z_3}=\pi _{Z_3}^{-1}\circ K\circ \pi _{Z_3}$ be the induced map of $K$ in $Z_3$.

\begin{proposition}

i) ${K}_{Z_3}(\mathcal{R}^1)=R_{q-1}$.

ii) ${K}_{Z_3}(JR_{q-1})=\mathcal{R}^1$.

iii) For all $1\leq i\leq q$, ${K}_{Z_3}(\Sigma _{i,i})=\mathcal{A}^{i,i}$.

iv) For all $1\leq i< j\leq q$, ${K}_{Z_3}(\Sigma _{i,j})=\mathcal{A}^{i,j}\cap \Sigma _{i,i}\cap \Sigma _{j,j}$.
\label{PropsitionBasicPropertyOfKZ3}\end{proposition}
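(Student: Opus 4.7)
The plan is to verify all four claims by explicit computation of $K = I \circ J$ in the local coordinate charts (\ref{EquationLocalCoordinateForR1}), (\ref{EquationLocalCoordinateForA12}), and (\ref{EquationLocalCoordinateForA11}). Working projectively so that $I(X) = \mathrm{adj}(X)$, the main tool will be the rank-one determinant expansion
\begin{equation*}
\det(uv^{T}+sB) \;=\; s^{n}\det(B) + s^{n-1}\,v^{T}\mathrm{adj}(B)\,u,
\end{equation*}
valid for $n \times n$ matrices $B$ and obtained from $\det(I+uv^{T})=1+v^{T}u$. For each part I pick a generic point of the source, describe a family of matrices degenerating to it, and compare the order of vanishing of each entry of $\mathrm{adj}(J(\cdot))$ against the blow-up coordinates of the target to read off the image in $Z_3$.

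Part ii) is closest to the surface: a generic point of $JR_{q-1}$ has the form $Y=J(X)$ with $X \in R_{q-1}$ of rank exactly $q-1$, so using $J \circ J = \mathrm{id}$ one gets $K(Y) = I(X) = \mathrm{adj}(X)$ projectively, a rank-one symmetric matrix in the blow-up centre $R_1$; the lift to $\mathcal{R}^{1}$ is then read off from (\ref{EquationLocalCoordinateForR1}), and surjectivity follows by varying $X$ and comparing dimensions. Part i) is the reverse computation: substituting $X(s) = \nu \otimes \nu + s\,\mathrm{diag}(0,v)$ and expanding entrywise gives
\begin{equation*}
J(X(s)) \;=\; \mu \otimes \mu + sP + O(s^{2})
\end{equation*}
with $\mu_i = 1/\nu_i$ and $P$ supported on the lower $(q-1)\times(q-1)$ block. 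The rank-one leading term kills every $(q-1)\times(q-1)$ minor at $s=0$, so $\mathrm{adj}(J(X(s))) = s^{q-2}Q + O(s^{q-1})$; since $P$ has a zero first row and column, the identity above yields $\det(J(X(s))) = O(s^{q-1})$, and matching leading terms in $J(X(s)) \, \mathrm{adj}(J(X(s))) = \det(J(X(s)))\,I$ at order $s^{q-2}$ forces $(\mu \otimes \mu)Q = 0$, so $Q\mu=0$ and $\det Q=0$. Thus the projective limit $Q$ lies in $R_{q-1}$, and varying $(\nu,v)$ together with $\dim \mathcal{R}^{1} = \dim R_{q-1} = \dim\mathcal{S}_q-1$ gives equality.

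For part iii), take $X \in \Sigma_{i,i}$ with $x_{i,i}=\epsilon$ and the other entries generic; the only unbounded entry of $J(X)$ is at position $(i,i)$, so projectively $J(X) = E_{i,i} + \epsilon Y + O(\epsilon^{2})$, where $E_{i,i}$ is the rank-one elementary matrix. Laplace-expanding each $(q-1)\times(q-1)$ minor of $E_{i,i} + \epsilon Y$ along the row or column carrying its lone nonzero entry shows that $\mathrm{adj}(J(X))_{a,b}$ is of order $\epsilon^{q-2}$ when $a,b \neq i$, and of strictly higher order otherwise; the leading $(q-1)\times(q-1)$ block is, up to sign, the adjugate of the reciprocal of the $(i,i)$-minor of $X$. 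Comparing with (\ref{EquationLocalCoordinateForA11}) identifies the image as a generic point of $\mathcal{A}^{i,i}$. Part iv) is the same analysis with two distinguished indices: $J(X) = E_{i,j} + E_{j,i} + \epsilon Y + O(\epsilon^{2})$, and Laplace expansion along rows and columns $i,j$ yields $\mathrm{adj}$-entries of order $\epsilon^{q-3}$ on the $(q-2)\times(q-2)$ block disjoint from $\{i,j\}$, of order $\epsilon^{q-2}$ on the ``cross'' positions, and of order $\epsilon^{q-1}$ at $(i,i)$ and $(j,j)$. Matching against (\ref{EquationLocalCoordinateForA12}) places the image on $\mathcal{A}^{i,j}$; the extra drop at $(i,i),(j,j)$ forces the exceptional coordinates $\zeta_{i,i} = \zeta_{j,j} = 0$, which is precisely the intersection with the strict transforms of $\Sigma_{i,i}$ and $\Sigma_{j,j}$.

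The main obstacle is the detailed bookkeeping of vanishing orders in parts i), iii), and iv): one must verify the precise vanishing exponent of every adjugate entry and argue that the leading coefficient varies freely over the target. Surjectivity onto each asserted image then follows from equality of dimensions combined with the genericity of the parameters introduced in the families.
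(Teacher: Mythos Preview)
Your approach is correct and gives a valid alternative to the paper's argument. The main methodological difference is in part~i): the paper conjugates by an explicit lower-triangular matrix $A$ chosen so that $A(1/\nu)=e_1$, which reduces $J(\pi_{\mathcal{R}^1}(s,v,\nu))$ to the block form $\mathrm{diag}(1,sv')$ up to $O(s^2)$; inverting this is trivial and yields the closed formula $K_{Z_3}(z)=A^{t}\,\mathrm{diag}(0,I_{q-1}(v'))\,A$, from which membership in $R_{q-1}$ and surjectivity are read off directly. You instead track vanishing orders of cofactors via the matrix determinant lemma and deduce $Q\mu=0$ from the adjugate identity, which is a perfectly sound route to the same conclusion. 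The paper's change-of-basis trick buys an explicit image formula (so no separate surjectivity or ``leading coefficient varies freely'' argument is needed), while your cofactor bookkeeping is more uniform across i)--iv) and requires no clever auxiliary matrix. For ii)--iv) the paper gives no details beyond a pointer to \cite{bedford-tuyen}, so your Laplace-expansion analysis of $\mathrm{adj}(E_{i,i}+\epsilon Y)$ and $\mathrm{adj}(E_{i,j}+E_{j,i}+\epsilon Y)$, with the extra order of vanishing at the diagonal positions forcing $\zeta_{i,i}=\zeta_{j,j}=0$ in the chart~(\ref{EquationLocalCoordinateForA12}), actually supplies more than the paper does. The only point to tighten is the one you already flag: in each case one should check that the leading coefficient in the cofactor expansion is generically nonzero and moves over a Zariski-open subset of the target, which for i) is exactly what the paper's explicit formula makes transparent.
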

\begin{proof}

i) It suffices to show that: for $\nu =(1,\nu _2,\ldots ,\nu _q)$, $z=\pi _{\mathcal{R}^1}(0,v,\nu )\in \mathcal{R}^1$ then

$$K_{Z_3}(z)=A^t\left (\begin{array}{ll}0 &0 \\0 &I_{q-1}(v')\end{array}\right )A,$$
where $I_{q-1}$ is the matrix inverse on $\mathcal{M}_{q-1}$,
$$v'=\left (-\frac{v_{j,k}}{\nu _j^2\nu _k^2}\right )_{2\leq j,k\leq q},
~A=\left (\begin{array}{llll}1 &0&\ldots &0 \\-\frac{1}{\nu _2} &1&&\\ \vdots & &\ddots &\\-\frac{1}{\nu _q}&&&1\end{array}\right ),$$ and $A^t$ is the
transpose of $A$. Here the entries of $A$ outside the main diagonal and the first column are zero.

Without loss of generality, we work at $v$ and $\nu$ such that $v'$ in the above is invertible. We have
$$J(\pi _{\mathcal{R}^1}(s,v,\nu ))=\frac{1}{\nu \otimes \nu}+sv'+O(s^2)=\pi _{\mathcal{R}^1}(s+O(s^2),v'+O(s),\frac{1}{\nu}).$$
Let $e_{1}=(1,0,\ldots ,0)$ be the first standard basis vector in $\mathbb{C}^q$. Then
$$A\left (\frac{1}{\nu \otimes \nu}\right )A^t=A\left (\frac{1}{\nu}\otimes \frac{1}{\nu}\right )A^t=
\left (A \frac{1}{\nu}\right )\otimes \left (A \frac{1}{\nu}\right )=e_1\otimes e_1=\left (\begin{array}{ll}1 &0\\0&0\end{array}\right ).$$

Since $A_{[1,1]}$ (respectively $A^t_{[1,1]}$), the matrix in $\mathcal{M}_{q-1}$ obtained by deleting the first row and column of $A$ (correspondingly
of $A^t$), is the identity matrix in $\mathcal{M}_{q-1}$, we obtain:

$$sAv'A^t=\left (\begin{array}{ll}0 &0\\0&sA_{[1,1]}v'A^t_{[1,1]}\end{array}\right )=\left (\begin{array}{ll}0 &0\\0&sv'\end{array}\right ).$$

Hence
\begin{eqnarray*}
K_{Z_3}(z)&=&\pi _{Z_3}^{-1}\circ I\circ J\circ \pi _{Z_3}(z)\\
&=&\pi _{Z_3}^{-1}\circ I(\frac{1}{\nu \otimes \nu}+sv'+O(s^2))\\
&=&\pi _{Z_3}^{-1}(A^tI[A(\frac{1}{\nu \otimes \nu}+sv'+O(s^2))A^t]A).
\end{eqnarray*}

The principal part (first terms of Taylor expansion) of the latter is equal to
$$\pi _{Z_3}^{-1}(A^tI\left (\begin{array}{ll}1 &0\\0&sv'\end{array}\right )A)
=\pi _{Z_3}^{-1}(A^t\left (\begin{array}{ll}s &0\\0&I_{q-1}(v')\end{array}\right )A),$$ and i) follows by letting $s\rightarrow 0$.

Proofs of ii), iii), and iv) are similar (cf. \cite{bedford-tuyen}, Sections 2 and 3).
\end{proof}

\begin{remark}
Proposition \ref{PropsitionBasicPropertyOfKZ3} iv) shows that $\Sigma _{i,j}$ ($i<j$) is still exceptional for the map $K_{Z_3}$, which differs from the
corresponding situation in \cite{bedford-tuyen} for general matrices. This motivates us to perform blowups in subsection E) below.
\end{remark}

D) Next we let $\pi _4:Z_4\rightarrow Z_3$ be the blow up of $Z_3$ along the strict transforms of $B_{i,i}=\mathcal{A}^{i,i}\cap \Sigma _{i,i}$ (where
$1\leq i\leq q$), with exceptional divisors $\mathcal{B}^{i,i}=\pi _4^{-1}(B_{i,i})$. We describe two local coordinate systems of $\pi _4$ near the
exceptional divisor $\mathcal{B}^{1,1}$.

For the first local coordinate system, we fix $2\leq i_0,j_0\leq q$, $1\leq k_0\leq q$. Let $t, \xi\in \mathbb{C}$; $v=(v_{i,j})_{2\leq i,j\leq q}\in
\mathcal{S}_{q-1}$ and $v_{i_0,j_0}=1$;
 $\zeta =(\zeta _{k,l})_{\min \{k,l\}=1,~k\not=
l}$ and $\zeta _{1,k_0}=1$. In the local coordinate $(t,\xi ,\zeta , v )$, the projection $\pi _4=\pi ^1_{\mathcal{B}^{1,1}}$ is given by
\begin{equation}
\pi ^1_{\mathcal{B}^{1,1}}(t,\xi ,\zeta ,v)=\left (\begin{array}{ll}t^2\xi&t\zeta \\t\zeta&v\end{array}\right ).\label{EquationLocalCoordinateForB11}
\end{equation}
In this local coordinate system, $\mathcal{B}^{1,1}=\{t=0\}$.

To cover the points corresponding to $\xi =\infty$ in the first projection $\pi ^1_{\mathcal{B}^{1,1}}$, we let $t,\xi \in \mathbb{C}$;
$v=(v_{i,j})_{2\leq i,j\leq q}\in \mathcal{S}_{q-1}$ and $v_{i_0,j_0}=1$;
 $\zeta =(\zeta _{k,l})_{\min \{k,l\}=1,~k\not= l}$ and $\zeta _{1,k_0}=1$.
In the local coordinate $(t,\xi ,\zeta , v )$, the projection $\pi _4=\pi ^2_{\mathcal{B}^{1,1}}$ is given by
\begin{equation}
\pi ^2_{\mathcal{B}^{1,1}}(t,\xi ,\zeta ,v)=\left (\begin{array}{ll}t^2\xi&t\xi\zeta \\t\xi\zeta&v\end{array}\right
).\label{EquationLocalCoordinateForB11Second}
\end{equation}
In this local coordinate system, $\mathcal{B}^{1,1}=\{t=0\}$. The set $\{t=0,\xi =\infty\}$ in the first projection $\pi ^1_{\mathcal{B}^{1,1}}$
corresponds to the set $\{t=0,\xi =0\}$ in this second projection $\pi ^2_{\mathcal{B}^{1,1}}$.

Let $K_{Z_4}=\pi _{Z_4}^{-1}\circ K\circ \pi _{Z_4}$ be the induced map of $K$ in $Z_4$.

\begin{proposition}
For $1\leq i\leq q$:

i) ${K}_{Z_4}(\mathcal{A}^{i,i})=\mathcal{B}^{i,i}\cap I(\Sigma _{i,i})$. In fact, if $(s=0,\zeta ,v)\in \mathcal{A}^{1,1}$ as in
(\ref{EquationLocalCoordinateForA11}) then
\begin{equation}
K_{Z_4}(s=0,\zeta ,v)=(t=0,\xi ',\zeta ',v' )\in \mathcal{B}^{1,1},\label{EquationImageOfA11}
\end{equation}
 where
$$\left (\begin{array}{ll}\xi '&\zeta '\\ \zeta '&v'\end{array}\right )=
I\left (\begin{array}{ll}0/\zeta _{1,1}&1/\zeta \\ 1/\zeta&1/v\end{array}\right ).$$

ii) ${K}_{Z_4}(\mathcal{B}^{i,i})=\mathcal{B}^{i,i}$.

Moreover, the restriction of ${K}_{Z_4}$ to each of the spaces $\mathcal{B}^{i,i}$ is the same as ${K}$, in the sense that

$${K}_{Z_4}(t=0,\xi ,\zeta ,v)=(t=0,\xi ',\zeta ', v'),$$
at generic points $(t=0,\xi ,\zeta ,v)$ of $\mathcal{B}^{1,1}$, where
$$\left (\begin{array}{ll}\xi '&\zeta '\\ \zeta '&v'\end{array}\right )={K}\left (\begin{array}{ll}\xi&\zeta \\ \zeta&v\end{array}\right ).$$
Similar results hold for the other $\mathcal{B}^{i,i}$'s ($1\leq i\leq q$). \label{PropsitionBasicPropertyOfKZ}\end{proposition}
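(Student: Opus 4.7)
The plan is to establish both claims via direct local computations using block matrix inversion (Schur complement) in the charts of (\ref{EquationLocalCoordinateForA11}) and (\ref{EquationLocalCoordinateForB11}). Throughout, write $w=(1/\zeta_{1,k})_{k\geq 2}$ (as a column vector) and $U=(1/v_{k,l})_{k,l\geq 2}$, so that $J\circ\pi$ has a natural $1+(q-1)$ block form aligned with the distinguished row and column of $\mathcal{A}^{1,1}$ and $\mathcal{B}^{1,1}$. The case $i=1$ is representative; the other $\mathcal{B}^{i,i}$ follow by symmetry.

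For (i), starting from (\ref{EquationLocalCoordinateForA11}), a direct calculation gives
\[J\circ\pi_{\mathcal{A}^{1,1}}(s,\zeta,v)=\begin{pmatrix}1/(s\zeta_{1,1}) & w^T/s \\ w/s & U\end{pmatrix}.\]
Inverting via Schur complement with pivot $a=1/(s\zeta_{1,1})$ yields $s_D=a-(w^T/s)U^{-1}(w/s)\sim-(w^TU^{-1}w)/s^2$ as $s\to 0$, so the three blocks of the inverse have sizes $O(s^2)$, $O(s)$, and $O(1)$ respectively. Identifying these with $\pi^1_{\mathcal{B}^{1,1}}(t,\xi',\zeta',v')$ at $t=s$ reads off $(\xi',\zeta',v')$ as precisely the entries of $I\begin{pmatrix}0 & w^T \\ w & U\end{pmatrix}$, which is formula (\ref{EquationImageOfA11}). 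The containment $K_{Z_4}(\mathcal{A}^{1,1})\subset I(\Sigma_{1,1})$ is then automatic: by construction $v'=U^{-1}-U^{-1}ww^TU^{-1}/(w^TU^{-1}w)$ satisfies $v'w=0$, so $\det(v')=0$, which defines $I(\Sigma_{1,1})$ in this chart. The reverse inclusion follows since the map $(\zeta,v)\mapsto I\begin{pmatrix}0 & w^T \\ w & U\end{pmatrix}$ is birational onto its image.

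For (ii), similarly,
\[J\circ\pi^1_{\mathcal{B}^{1,1}}(t,\xi,\zeta,v)=\begin{pmatrix}1/(t^2\xi) & w^T/t \\ w/t & U\end{pmatrix},\]
and the key observation is that the Schur complement equals $\sigma/t^2$, where $\sigma:=1/\xi-w^TU^{-1}w$ is independent of $t$; moreover, $\sigma$ is exactly the Schur complement for $J\begin{pmatrix}\xi & \zeta \\ \zeta & v\end{pmatrix}$. Block inversion then gives the entries $(1,1)=t^2/\sigma$, $(1,k)=-t(U^{-1}w)_k/\sigma$, and $(k,l)=U^{-1}_{k,l}+(U^{-1}w)_k(U^{-1}w)_l/\sigma$, which matches $\pi^1_{\mathcal{B}^{1,1}}(t,\xi',\zeta',v')$ with $(\xi',\zeta',v')$ equal to the entries of $I\circ J\begin{pmatrix}\xi & \zeta \\ \zeta & v\end{pmatrix}=K\begin{pmatrix}\xi & \zeta \\ \zeta & v\end{pmatrix}$. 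Letting $t\to 0$ then produces both $K_{Z_4}(\mathcal{B}^{1,1})=\mathcal{B}^{1,1}$ and the claimed restriction identity.

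The bulk of the argument is routine block-matrix bookkeeping; the main structural point---and what I expect to be the only real obstacle---is the $t$-independence of the Schur complement in (ii), which is precisely what makes the blow-up performed in step (D) resolve the behavior of $K_{Z_3}$ along the strict transforms of $B_{i,i}=\mathcal{A}^{i,i}\cap\Sigma_{i,i}$. Points with $\sigma=0$ (where $\xi'=\infty$) form a codimension-one locus covered by the second chart $\pi^2_{\mathcal{B}^{1,1}}$ of (\ref{EquationLocalCoordinateForB11Second}), and the corresponding calculation there is entirely parallel.
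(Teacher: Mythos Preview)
Your argument is correct. The paper's proof packages the same computation differently: rather than working out the Schur complement by hand, it invokes the scaling identity
\[
K\begin{pmatrix} t^{2}\xi & t\zeta \\ t\zeta & v \end{pmatrix}
=\begin{pmatrix} t^{2}\xi' & t\zeta' \\ t\zeta' & v' \end{pmatrix},
\qquad
\begin{pmatrix} \xi' & \zeta' \\ \zeta' & v' \end{pmatrix}=K\begin{pmatrix} \xi & \zeta \\ \zeta & v \end{pmatrix},
\]
which is equivalent to the observation that $K$ commutes with conjugation by $\operatorname{diag}(t,1,\dots,1)$. For part~(i) the paper rewrites $s\zeta_{1,1}=s^{2}(\zeta_{1,1}/s)$, applies the identity with $t=s$ and $\xi=\zeta_{1,1}/s$, and then lets $s\to 0$; part~(ii) is immediate from the identity. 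Your ``$t$-independence of the Schur complement'' is exactly this scaling identity unpacked at the level of block inversion, so the two proofs are the same computation viewed from two angles. The paper's version is slicker and reusable (the analogous identity drives the $\mathcal{D}^{i,j}$ calculations later), while yours is self-contained and makes the orders of vanishing $t^{2},t,t^{0}$ visible without appeal to an external formula.
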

\begin{proof}

i) We make use of the following property (see formula (4.4) in \cite{bedford-tuyen}):

If
$${K}\left (\begin{array}{ll}\xi&\zeta \\ \zeta&v\end{array}\right )=\left (\begin{array}{ll}\xi '&\zeta '\\ \zeta '&v'\end{array}\right )$$
then
\begin{equation}
{K}\left (\begin{array}{ll}t^2\xi&t\zeta \\t\zeta&v\end{array}\right )=\left (\begin{array}{ll}t^2\xi '&t\zeta '\\t\zeta '&v'\end{array}\right
).\label{EquationHomogeneousPropertyOfK}
\end{equation}

Using the projection (\ref{EquationLocalCoordinateForA11}), to determine $K_{Z_4}(\mathcal{A}^{1,1})$ it suffices to compute the limit when $s\rightarrow
0$ of $K(x)$ where
$$x=\left (\begin{array}{ll}s\zeta&s\zeta \\s\zeta&v\end{array}\right ).$$

Rewriting $x$ as
$$x=\left (\begin{array}{lll}s^2\zeta _{1,1}/s&s\zeta\\s\zeta&v\end{array}\right ),$$
using the formula (\ref{EquationHomogeneousPropertyOfK}), we have
$$K(x)=\left (\begin{array}{ll}s^2\xi '&s\zeta '\\s\zeta '&v'\end{array}\right ),$$
where
$$\left (\begin{array}{ll}\xi '&\zeta '\\ \zeta '&v'\end{array}\right )={K}\left (\begin{array}{ll}\zeta _{1,1}/s&\zeta \\ \zeta&v\end{array}\right )=
I\left (\begin{array}{ll}s/\zeta _{1,1}&1/\zeta \\ 1/\zeta&1/v\end{array}\right ).$$ The last formula shows that when $s\rightarrow 0$, the limit of
$K(x)$ is in $\mathcal{B}^{1,1}\cap I(\Sigma _{1,1})$, and we obtain (\ref{EquationImageOfA11}). Hence $K_{Z_4}(\mathcal{A}^{1,1})= \mathcal{B}^{1,1}\cap
I(\Sigma _{1,1})$.

The proof of ii) is similar.
\end{proof}

Let us consider a matrix
$$x=\left (\begin{array}{ll}\xi&\zeta \\ \zeta&v\end{array}\right ),$$
written as in (\ref{EquationLocalCoordinateForB11}). That is, $\xi$ and the $\zeta 's$ fill out the first row and column, where $\xi \in \mathbb{C}$. We
will consider algebraic subvarieties $W\subset \mathcal{S}_q$ with the property that whenever $x\in W$, then
\begin{equation}\left (\begin{array}{ll}t^2\xi&t\zeta \\
t\zeta&v\end{array}\right )\in W,\label{EquationCompatibility}\end{equation} for all $\mathbb{C}\ni t\not= 0$. If $W$ has this property, and if no
component of $W$ is contained in the indeterminacy loci of $I$, $J$, and $K$, then so do $I(W)$, $J(W)$, and $K(W)$.

We say that an irreducible hypersurface $W\subset\mathcal{S}_q$ is compatible with $\mathcal{B}^{1,1}$ if condition (\ref{EquationCompatibility}) is
satisfied and if moreover
$$W\not\subseteq
JR_{q-1}\cup \bigcup _{(k,l)\not= (1,1)}\Sigma _{k,l}.$$ When $W$ is compatible, then $W$ is not contained in any of the centers of blowups in the
construction of $Z_4$, thus we can take its strict transform inside $Z_4$ and define $\mathcal{B}^{1,1}\cap W\subset Z_4$. Using coordinate projections
analogous to (\ref{EquationLocalCoordinateForB11}), we may also define what it means for $W$ to be compatible with $\mathcal{B}^{i,i}$ for $2\leq i\leq
q$. Note that both hypersurfaces $\Sigma _{1,1}$ and $I(\Sigma _{1,1})$ are compatible with $\mathcal{B}^{1,1}$.

\begin{proposition}
For $1\leq i\leq q$:

If $W$ is compatible with $\mathcal{B}^{i,i}$ and $W\not\subseteq \Sigma _{i,i}$, then ${K}_{Z_4}(\mathcal{B}^{i,i}\cap W)=\mathcal{B}^{i,i}\cap {K}(W)$.

If $W=\Sigma _{i,i}$, then $K_{Z_4}(K_{Z_4}(\mathcal{B}^{i,i}\cap \Sigma _{i,i}))=\mathcal{B}^{i,i}\cap I(\Sigma _{i,i})$.

Moreover, $K_{Z_4}(\mathcal{B}^{i,i}\cap \Sigma _{i,i})$ can be written explicitly. For example, if $i=1$ then in the local coordinate system
(\ref{EquationLocalCoordinateForB11Second}) we have: $K_{Z_4}(\mathcal{B}^{1,1}\cap \Sigma _{1,1})=\{t=\xi =0\}$.

\label{PropsitionBasicPropertyOfKZ2}\end{proposition}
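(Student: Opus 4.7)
The central tool is the homogeneity identity (\ref{EquationHomogeneousPropertyOfK}), which in chart $\pi^{1}_{\mathcal{B}^{1,1}}$ says that the lifted map acts by $K_{Z_{4}}(t,\xi,\zeta,v)=(t,\xi',\zeta',v')$ with $(\xi',\zeta',v')=K(\xi,\zeta,v)$ wherever the latter is defined; this is essentially the content of Proposition \ref{PropsitionBasicPropertyOfKZ}(ii). The plan is to treat the two cases $W\not\subseteq \Sigma_{i,i}$ and $W=\Sigma_{i,i}$ separately; I work with $i=1$ throughout, the other indices being analogous.

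For the first case, the compatibility of $W$ with $\mathcal{B}^{1,1}$ (condition (\ref{EquationCompatibility})) means $W$ is stable under the $\mathbb{C}^{*}$-action $(\xi,\zeta,v)\mapsto (t^{2}\xi,t\zeta,v)$, so its strict transform meets $\mathcal{B}^{1,1}=\{t=0\}$ exactly at those $(0,\xi,\zeta,v)$ for which $\bigl(\begin{smallmatrix}\xi&\zeta\\\zeta&v\end{smallmatrix}\bigr)\in W$. Because $W\not\subseteq \Sigma_{1,1}$, generic such points have $\xi\neq 0$, so $K$ is defined there, and the formula above immediately yields $K_{Z_{4}}(\mathcal{B}^{1,1}\cap W)=\mathcal{B}^{1,1}\cap K(W)$. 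The right-hand side is well-defined because compatibility with $\mathcal{B}^{1,1}$ is preserved by $K$, as noted in the paragraph preceding the Proposition.

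The case $W=\Sigma_{1,1}$ is more delicate: now $\xi=0$ generically in chart $\pi^{1}$ and Proposition \ref{PropsitionBasicPropertyOfKZ}(ii) does not apply directly, since $K$ applied to a matrix with zero $(1,1)$-entry is indeterminate on $\mathcal{S}_{q}$. Instead, I approach $\mathcal{B}^{1,1}\cap \Sigma_{1,1}=\{t=\xi=0\}$ via a two-parameter family $(t,\xi)=(\epsilon,\delta)$ with $\epsilon,\delta\to 0$ independently, and apply (\ref{EquationHomogeneousPropertyOfK}) with parameter $t=\epsilon$ to reduce the computation of $K(\pi^{1}_{\mathcal{B}^{1,1}}(\epsilon,\delta,\zeta,v))$ to the leading-order expansion of $K(\delta,\zeta,v)$ as $\delta\to 0$. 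This expansion is exactly the block-matrix inversion carried out in the proof of Proposition \ref{PropsitionBasicPropertyOfKZ3}(iii): the $(1,1)$-entry becomes of order $\epsilon^{2}\delta$, the remaining entries of the first row and column become of order $\epsilon\delta$, and the $v$-block is $v+O(\delta)$. Lifting the resulting matrix into the second chart $\pi^{2}_{\mathcal{B}^{1,1}}$ produces local coordinates $t$ of order $\epsilon$ and $\xi$ of order $\delta$, both tending to zero jointly. This yields the explicit formula $K_{Z_{4}}(\mathcal{B}^{1,1}\cap \Sigma_{1,1})=\{t=\xi=0\}$ in chart $\pi^{2}$.

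Finally, in chart $\pi^{2}$ the locus $\{t=\xi=0\}$ is $\mathcal{B}^{1,1}$ intersected with the strict transform of $\mathcal{A}^{1,1}$ (since $\{\xi=0\}$ in chart $\pi^{2}$ is precisely that strict transform), hence is contained in $\mathcal{A}^{1,1}$. Applying $K_{Z_{4}}$ once more and invoking Proposition \ref{PropsitionBasicPropertyOfKZ}(i) gives the inclusion $K_{Z_{4}}^{2}(\mathcal{B}^{1,1}\cap \Sigma_{1,1})\subseteq K_{Z_{4}}(\mathcal{A}^{1,1})=\mathcal{B}^{1,1}\cap I(\Sigma_{1,1})$. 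For the reverse inclusion I observe that $K_{Z_{4}}$ is birational, so on the irreducible variety $\mathcal{B}^{1,1}\cap \Sigma_{1,1}$ (of codimension one in $\mathcal{B}^{1,1}$) both applications of $K_{Z_{4}}$ preserve dimension generically; hence the image is an irreducible subvariety of the same dimension inside the irreducible hypersurface $\mathcal{B}^{1,1}\cap I(\Sigma_{1,1})$, forcing equality. The main obstacle is precisely the second case: the indeterminacy of $K$ at $\xi=0$ forces the two-parameter degeneration and the passage from chart $\pi^{1}$ through $\{\xi=\infty\}$ into chart $\pi^{2}$, and verifying that the limit collapses onto $\{t=\xi=0\}$ rather than some strictly larger locus requires careful bookkeeping of the leading-order terms of the block-matrix inversion through the full lift to $Z_{4}$.
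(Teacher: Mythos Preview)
Your treatment of the first claim and the explicit third claim is correct and follows the paper's approach: both rely on the homogeneity identity (\ref{EquationHomogeneousPropertyOfK}) together with Proposition~\ref{PropsitionBasicPropertyOfKZ}(ii), exactly as the paper indicates.

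For the second claim your route diverges slightly from the paper's. The paper deduces it by a direct computation ``similar to the proof of Proposition~\ref{PropsitionBasicPropertyOfKZ}(i)'': one approaches a generic point of $\{t=\xi=0\}$ in chart $\pi^{2}_{\mathcal{B}^{1,1}}$, writes the matrix as $\bigl(\begin{smallmatrix}(t\xi)^{2}/\xi & (t\xi)\zeta\\ (t\xi)\zeta & v\end{smallmatrix}\bigr)$, applies (\ref{EquationHomogeneousPropertyOfK}) with parameter $t\xi$, and reads off the limit $I\bigl(\begin{smallmatrix}0 & 1/\zeta\\ 1/\zeta & 1/v\end{smallmatrix}\bigr)$ in chart $\pi^{1}_{\mathcal{B}^{1,1}}$, which is visibly all of $\mathcal{B}^{1,1}\cap I(\Sigma_{1,1})$. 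Your argument instead observes $\{t=\xi=0\}\subset\mathcal{A}^{1,1}$, invokes Proposition~\ref{PropsitionBasicPropertyOfKZ}(i) for the inclusion, and then appeals to a dimension count for equality. The inclusion step is fine, but the sentence ``$K_{Z_4}$ is birational, so both applications preserve dimension generically'' is not a valid justification: birationality of $K_{Z_4}$ on $Z_4$ does not prevent it from contracting a specific codimension-two subvariety, and indeed $\mathcal{A}^{1,1}$ itself is exceptional for $K_{Z_4}$ (it drops dimension by one), so the restriction $K_{Z_4}|_{\mathcal{A}^{1,1}}$ has one-dimensional fibres. To close the gap you must check that $\{t=\xi=0\}$ is transverse to those fibres---equivalently, that the formula (\ref{EquationImageOfA11}) is independent of $\zeta_{1,1}$ so that the section $\{\zeta_{1,1}=0\}\subset\mathcal{A}^{1,1}$ still maps onto $\mathcal{B}^{1,1}\cap I(\Sigma_{1,1})$. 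Once you say this explicitly, your argument is complete; the paper's direct computation simply sidesteps the issue.
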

\begin{proof}
The first claim follows from the discussion in last paragraph and Proposition \ref{PropsitionBasicPropertyOfKZ}.

The proof of the third claim is similar to that of Proposition \ref{PropsitionBasicPropertyOfKZ3} iii).

The second claim follows from the third claim and an argument similar to that of the proof of Proposition \ref{PropsitionBasicPropertyOfKZ} i).
\end{proof}

E) Next we let $\pi _5:Z_5\rightarrow Z_4$ be the blow up of $Z_4$ along the strict transforms of $C_{i,j}=\mathcal{A}^{i,j}\cap \Sigma _{i,i}\cap \Sigma
_{j,j}$ (where $1\leq i<j\leq q$), with exceptional divisors $\mathcal{C}^{i,j}$. We describe a local coordinate system of $\pi _5$ near the exceptional
divisor $\mathcal{C}^{1,2}$. We fix $3\leq i_0,j_0\leq q$, $1\leq \min\{k_0,l_0\}\leq 2$, $k_0\not= l_0$. Let $t\in \mathbb{C}$; $v=(v_{i,j})_{3\leq
i,j\leq q}\in \mathcal{S}_{q-2}$ and $v_{i_0,j_0}=1$; $\xi =(\xi_{1,1},~\xi _{2,2})\in\mathbb{C}^2$;
  $\zeta =(\zeta _{k,l})_{1\leq \min\{k,l\}\leq 2,~k\not= l}$, and $\zeta
_{k_0,l_0}=1$. In the local coordinate $(t,\xi ,\zeta , v )$, the projection $\pi _5=\pi _{\mathcal{C}^{1,2}}$ is given by

\begin{equation}\pi _{\mathcal{C}^{1,2}}(t,\xi ,\zeta ,v)=\left (\begin{array}{lll}t^2\xi _{1,1}&t\zeta&t\zeta \\t\zeta&t^2\xi _{2,2}&t\zeta
\\t\zeta&t\zeta&v\end{array}\right ).\label{EquationLocalCoordinateForC12}
\end{equation}
In this local coordinate system, $\mathcal{C}^{1,2}=\{t=0\}$.

F) Finally,  we let $\pi _6:Z_6\rightarrow Z_5$ be the blow up of $Z_5$ along the strict transforms of $D_{i,j}=\mathcal{C}^{i,j}\cap \Sigma _{i,j}$
(where $1\leq i<j\leq q$), with exceptional divisors $\mathcal{D}^{i,j}=\pi _6^{-1}(D_{i,j})$. We describe two local coordinate systems of $\pi _6$ near
the exceptional divisor $\mathcal{D}^{1,2}$.

For the first local coordinate system, we fix $3\leq i_0,j_0\leq q$, $1\leq \min\{k_0,l_0\}\leq 2<\max\{k_0,l_0\}$. Let $t\in \mathbb{C}$;
$v=(v_{i,j})_{3\leq i,j\leq q}\in \mathcal{S}_{q-2}$ and $v_{i_0,j_0}=1$; $\xi =(\xi_{1,1},~\xi _{1,2},~\xi _{2,2})\in\mathbb{C}^3$;
 $\zeta =(\zeta _{k,l})_{1\leq \min\{k,l\}\leq 2<\max \{k,l\}}$, and $\zeta
_{k_0,l_0}=1$. In the local coordinate $(t,\xi ,\zeta , v )$, the projection $\pi _6=\pi ^1_{\mathcal{D}^{1,2}}$ is given by
\begin{equation}
\pi ^1_{\mathcal{D}^{1,2}}(t,\xi ,\zeta ,v)=\left (\begin{array}{lll}t^2\xi _{1,1}&t^2\xi _{1,2}&t\zeta \\t^2\xi _{1,2}&t^2\xi _{2,2}&t\zeta\\t\zeta
&t\zeta &v\end{array}\right ). \label{EquationLocalCoordinateForD12}
\end{equation}
In this local coordinate system, $\mathcal{D}^{1,2}=\{t=0\}$.

To cover the points corresponding to $\xi _{1,2}=\infty$ in the first projection $\pi ^1_{\mathcal{D}^{1,2}}$, we let $t\in \mathbb{C}$;
$v=(v_{i,j})_{3\leq i,j\leq q}\in \mathcal{S}_{q-2}$ and $v_{i_0,j_0}=1$; $\lambda \in\mathbb{C}$; $\xi =(\xi_{1,1},~\xi _{1,2},~\xi
_{2,2})\in\mathbb{C}^3$ and one of its coordinates is $1$;
 $\zeta =(\zeta _{k,l})_{1\leq \min\{k,l\}\leq 2<\max \{k,l\}}$, and $\zeta
_{k_0,l_0}=1$. In the local coordinate $(t,\xi ,\zeta , v )$, the projection $\pi _6=\pi ^2_{\mathcal{D}^{1,2}}$ is given by
\begin{equation}
\pi ^2_{\mathcal{D}^{1,2}}(t, \lambda ,\xi ,\zeta ,v)=\left (\begin{array}{lll}t^2\lambda ^2\xi_{1,1}&t^2\lambda \xi_{1,2}&t\lambda \zeta \\t^2\lambda
\xi _{1,2}&t^2\lambda ^2\xi _{2,2}&t\lambda \zeta\\t\lambda\zeta &t\lambda\zeta &v\end{array}\right ).\label{EquationLocalCoordinateForD12Second}
\end{equation}
In this local coordinate system, $\mathcal{D}^{1,2}=\{t=0\}$. The set $\{t=0, \xi _{1,2}=\infty\}$ in the first projection $\pi ^1_{\mathcal{D}^{1,2}}$
corresponds to the set $\{t=0, \lambda =0\}$ in this second projection $\pi ^2_{\mathcal{D}^{1,2}}$.

F) We define $Z=Z_6$. Let ${K}_Z=\pi _Z^{-1}\circ {K}\circ \pi _Z:Z\rightarrow Z$ be the induced map of ${K}$ on $Z$.

\begin{proposition}

For $1\leq i<j\leq q$:

i) $K_Z(\Sigma _{i,j})=\mathcal{C}^{i,j}$.

ii) ${K}_Z(\mathcal{A}^{i,j})=\mathcal{D}^{i,j}\cap I(\Sigma _{i,i}\cap \Sigma _{j,j}\cap \Sigma _{i,j})$.

iii) ${K}_Z(\mathcal{C}^{i,j})=\mathcal{D}^{i,j}\cap I(\Sigma _{i,j})$.

iv)  ${K}_Z(\mathcal{D}^{i,j})=\mathcal{D}^{i,j}$.

Moreover, the restriction of ${K}_Z$ to each of the spaces $\mathcal{D}^{i,j}$ is the same as ${K}$, in the sense that

$${K}_Z(t=0,\xi ,\zeta ,v)=(t=0,\xi ',\zeta ',v'),$$
at generic points $(t=0,\xi ,\zeta ,v)$ of $\mathcal{D}^{1,2}$, where
$$\left (\begin{array}{lll}\xi '&\xi '&\zeta '\\ \xi '&\xi '&\zeta '
\\ \zeta '&\zeta '&v '\end{array}\right )={K}\left (\begin{array}{lll}\xi& \xi&\zeta \\ \xi&\xi&\zeta\\ \zeta &\zeta &v\end{array}\right ).$$

Similar results hold for other $\mathcal{D}^{i,j}$'s ($1\leq i<j\leq q$).

\label{PropsitionBasicPropertyOfKZ1}\end{proposition}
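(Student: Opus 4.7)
The plan is to follow the pattern of the proofs of Propositions \ref{PropsitionBasicPropertyOfKZ3} and \ref{PropsitionBasicPropertyOfKZ}: work in the local coordinates for each new exceptional divisor, apply the homogeneous formula $\widehat{K}_{i,j}(y)=C_{i,j}(1/\widehat{y})\prod(\widehat{y})$ derived in the proof of Proposition \ref{PropositionBasicPropertiesOfK}, track the orders of vanishing and poles of each entry in the relevant blowup parameter, and read off the limit. The key technical ingredient I would establish first is a two-row version of the homogeneity formula \eqref{EquationHomogeneousPropertyOfK}: if a symmetric $q\times q$ matrix is partitioned as
\[
x=\left(\begin{array}{ll}\xi & \zeta\\ \zeta^{t} & v\end{array}\right),
\]
with $\xi$ a symmetric $2\times 2$ block, $\zeta$ a $2\times(q-2)$ block, and $v$ a symmetric $(q-2)\times(q-2)$ block, and if $K(x)$ has corresponding blocks $(\xi',\zeta',v')$, then
\[
K\!\left(\begin{array}{ll}t^{2}\xi & t\zeta\\ t\zeta^{t} & v\end{array}\right)=\left(\begin{array}{ll}t^{2}\xi' & t\zeta'\\ t(\zeta')^{t} & v'\end{array}\right).
\]
The proof is the same cofactor/monomial count that gave \eqref{EquationHomogeneousPropertyOfK}: one verifies that $C_{i,j}(1/\widehat{x}_{t})$ scales as $t^{-2-r}$, where $r\in\{0,1,2\}$ is the number of indices in $(i,j)$ outside $\{1,2\}$, while $\prod(\widehat{x}_{t})$ contributes the common overall factor $t^{4q}$; dividing through by the smallest power produces exactly the $(t^{2},t,1)$-scaling on the right-hand side.

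With this two-row homogeneity in hand, part (iv) is immediate: substituting \eqref{EquationLocalCoordinateForD12} and applying the identity shows that $\mathcal{D}^{1,2}$ is $K_{Z}$-invariant, and setting $t=0$ recovers exactly $K$ acting on the $q\times q$ block pattern with coinciding first two rows and columns, which gives the restriction formula. Parts (ii) and (iii) are handled by the rewriting trick from the proof of Proposition \ref{PropsitionBasicPropertyOfKZ}(i): rewrite the $(1,1),(2,2),(1,2)$-entries $s\zeta_{\bullet,\bullet}$ in \eqref{EquationLocalCoordinateForA12} as $s^{2}(\zeta_{\bullet,\bullet}/s)$, respectively the $(1,2)$-entry $t\zeta_{1,2}$ in \eqref{EquationLocalCoordinateForC12} as $t^{2}(\zeta_{1,2}/t)$, so that the two-row homogeneity lemma applies with parameter $s$ or $t$. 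The principal parts as $s,t\to 0$ then lie in $\mathcal{D}^{1,2}$, and the fact that the absorbed entries $\zeta_{\bullet,\bullet}/s,\zeta_{1,2}/t$ become infinite forces the corresponding entries of $J$ to vanish in the limit, placing the image in $\mathcal{D}^{1,2}\cap I(\Sigma_{1,1}\cap\Sigma_{2,2}\cap\Sigma_{1,2})$ and $\mathcal{D}^{1,2}\cap I(\Sigma_{1,2})$, respectively. Part (i) is the easiest: a direct computation of the vanishing orders of $\widehat{K}_{i,j}$ at $\widehat{y}_{1,2}=0$ gives order $2$ for $(1,1),(2,2)$, order $1$ for $(1,2),(1,l),(2,l)$ with $l\geq 3$, and order $0$ for $(k,l)$ with $k,l\geq 3$, which reproduces exactly the scaling pattern of \eqref{EquationLocalCoordinateForC12}.

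The main obstacle I foresee is the bookkeeping across the second coordinate chart \eqref{EquationLocalCoordinateForD12Second} of $\mathcal{D}^{1,2}$ near $\xi_{1,2}=\infty$. Because the rewriting in parts (ii) and (iii) sends the $\xi_{1,2}'$-coordinate of the image to infinity in the first chart, the claimed intersections with $I(\Sigma_{\bullet})$ can only be read off after passing to the second chart and checking compatibility of the chart transition $\xi_{1,2}\leftrightarrow\lambda$. A similar check is required in (iv) to conclude invariance of $\mathcal{D}^{i,j}$ on the full divisor rather than only on the affine piece covered by \eqref{EquationLocalCoordinateForD12}, and to promote the inclusions established in parts (ii) and (iii) to the asserted equalities.
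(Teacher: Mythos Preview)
Your approach is correct and is exactly the paper's: the two-row homogeneity identity you establish is precisely the formula the paper records and invokes, and parts (i)--(iv) then follow by the rewriting/limit argument you outline, mirroring the proof of Proposition~\ref{PropsitionBasicPropertyOfKZ}. Your one worry is misplaced: in (ii) and (iii) it is the \emph{input} $\xi$-block that blows up after the rewriting, but the output $(\xi',\zeta',v')$ is $I$ applied to a matrix whose absorbed entries have tended to $0$ (hence lying in $\Sigma_{1,1}\cap\Sigma_{2,2}\cap\Sigma_{1,2}$, respectively $\Sigma_{1,2}$), so the image coordinates are generically finite and the limit lands in the first chart \eqref{EquationLocalCoordinateForD12}, where the claimed intersection with $I(\Sigma_{\bullet})$ is read off directly.
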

\begin{proof}
The proofs of all these claims are similar to the proof of Proposition \ref{PropsitionBasicPropertyOfKZ},  but instead of using formula
(\ref{EquationHomogeneousPropertyOfK}), we use a similar formula:

If
$${K}\left (\begin{array}{lll}\xi& \xi&\zeta \\ \xi&\xi&\zeta\\ \zeta &\zeta &v\end{array}\right )=\left (\begin{array}{lll}\xi '&\xi '&\zeta '\\ \xi '&\xi '&\zeta '
\\ \zeta '&\zeta '&v '\end{array}\right )$$
then
$${K}\left (\begin{array}{lll}t^2\xi& t^2\xi&t\zeta \\ t^2\xi&t^2\xi& t\zeta\\ t\zeta &t\zeta &v\end{array}\right )=\left (\begin{array}{lll}t^2\xi '&t^2\xi '&t\zeta '
\\ t^2\xi '&t^2\xi '&t\zeta ' \\ t\zeta '&t\zeta '&v '\end{array}\right ).$$
\end{proof}

\begin{corollary} The exceptional hypersurfaces of $K_Z$
are $\mathcal{A}^{i,i}$ (for $1\leq i\leq q$), $\mathcal{A}^{i,j}$ (for $1\leq i<j\leq q$), and $\mathcal{C}^{i,j}$ (for $1\leq i<j\leq q$).
\label{Corollary0}\end{corollary}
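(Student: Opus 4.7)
The plan is to prove Corollary \ref{Corollary0} by a direct case-analysis: I enumerate every irreducible hypersurface of $Z$ and, for each one, use Propositions \ref{PropsitionBasicPropertyOfKZ3}, \ref{PropsitionBasicPropertyOfKZ}, and \ref{PropsitionBasicPropertyOfKZ1} to read off its image under $K_Z$. A hypersurface is exceptional precisely when that image has codimension at least $2$ in $Z$. Since all of the relevant image descriptions have already been established in the preceding propositions, this is essentially a bookkeeping argument; the only genuine content is verifying that the list of hypersurfaces under consideration is complete.

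First I would list the hypersurfaces of $Z$. These fall into two groups. The exceptional divisors produced by the construction in Section~3, namely $\mathcal{R}^1$, $\mathcal{A}^{i,j}$ for $1\leq i\leq j\leq q$, $\mathcal{B}^{i,i}$ for $1\leq i\leq q$, $\mathcal{C}^{i,j}$ and $\mathcal{D}^{i,j}$ for $1\leq i<j\leq q$; and the strict transforms of hypersurfaces coming from $\mathcal{S}_q$ that could a priori contract, i.e.\ the exceptional hypersurfaces of $K$ listed in Proposition \ref{PropositionBasicPropertiesOfK}(a), namely $JR_{q-1}$ and the $\Sigma_{i,j}$. Any other hypersurface of $\mathcal{S}_q$ is sent by $K$ to a hypersurface (by Proposition \ref{PropositionBasicPropertiesOfK}(a)), and its strict transform in $Z$ therefore still maps dominantly to a hypersurface, so cannot be exceptional for $K_Z$.

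Next I would walk through the list. From Proposition \ref{PropsitionBasicPropertyOfKZ3}(i)--(iii), the images $K_Z(\mathcal{R}^1)$, $K_Z(JR_{q-1})$, $K_Z(\Sigma_{i,i})$ are the hypersurfaces $R_{q-1}$, $\mathcal{R}^1$, $\mathcal{A}^{i,i}$ respectively, so none of these are exceptional. By Proposition \ref{PropsitionBasicPropertyOfKZ1}(i), $K_Z(\Sigma_{i,j})=\mathcal{C}^{i,j}$ for $i<j$, again a hypersurface, so $\Sigma_{i,j}$ is not exceptional in $Z$ (this is exactly the reason for the extra blowups in E), as already noted in the remark following Proposition \ref{PropsitionBasicPropertyOfKZ3}). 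Proposition \ref{PropsitionBasicPropertyOfKZ}(ii) and Proposition \ref{PropsitionBasicPropertyOfKZ1}(iv) give $K_Z(\mathcal{B}^{i,i})=\mathcal{B}^{i,i}$ and $K_Z(\mathcal{D}^{i,j})=\mathcal{D}^{i,j}$, so those are non-exceptional as well. This leaves the three families claimed in the corollary, and for them Proposition \ref{PropsitionBasicPropertyOfKZ}(i) and Proposition \ref{PropsitionBasicPropertyOfKZ1}(ii)--(iii) describe the images as $\mathcal{B}^{i,i}\cap I(\Sigma_{i,i})$, $\mathcal{D}^{i,j}\cap I(\Sigma_{i,i}\cap\Sigma_{j,j}\cap\Sigma_{i,j})$, and $\mathcal{D}^{i,j}\cap I(\Sigma_{i,j})$ respectively, each a proper intersection of distinct hypersurfaces, hence of codimension $\geq 2$. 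So these are genuinely exceptional.

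The only point that requires a moment of care, and which I would flag as the main (minor) obstacle, is the completeness of the list of hypersurfaces in step one: one has to argue that no additional exceptional hypersurface can appear in $Z$ beyond the strict transforms from $\mathcal{S}_q$ and the exceptional divisors created in A)--F). This follows because the blowup centers in the construction all have codimension $\geq 2$, so no new hypersurfaces are introduced other than the exceptional divisors themselves; and because $K$ on $\mathcal{S}_q$ contracts only the hypersurfaces in Proposition \ref{PropositionBasicPropertiesOfK}(a), any hypersurface in $Z$ that was not an exceptional divisor of the construction and was not one of the $\Sigma_{i,j}$ or $JR_{q-1}$ is sent dominantly to a hypersurface by $K_Z$. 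Once this is observed the corollary follows immediately.
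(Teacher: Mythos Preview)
Your proposal is correct and takes essentially the same approach as the paper: the corollary is stated without proof because it is meant to follow immediately from Propositions \ref{PropsitionBasicPropertyOfKZ3}, \ref{PropsitionBasicPropertyOfKZ}, and \ref{PropsitionBasicPropertyOfKZ1}, and your case-by-case reading of those propositions is exactly the intended deduction. The only thing to note is that some of the image statements you cite are literally stated for $K_{Z_3}$ or $K_{Z_4}$ rather than $K_Z$, but since the later blowup centers lie in proper subvarieties of the relevant images, the conclusions persist for $K_Z$; this is implicit in the paper and in your argument as well.
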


Let us consider a matrix
$$x=\left (\begin{array}{lll}\xi _{1,1}&\xi _{1,2}&\zeta \\ \xi _{1,2}&\xi _{2,2}&\zeta\\ \zeta
&\zeta &v\end{array}\right ),$$ written as in (\ref{EquationLocalCoordinateForD12}). That is, the $\xi$'s and $\zeta 's$ fill out first two rows and
first two columns. We will consider algebraic subvarieties $W\subset \mathcal{S}_q$ with the property that whenever $x\in W$, then
\begin{equation}\left (\begin{array}{lll}t^2\xi _{1,1}&t^2\xi _{1,2}&t\zeta \\t^2\xi _{1,2}&t^2\xi _{2,2}&t\zeta\\t\zeta
&t\zeta &v\end{array}\right )\in W,\label{EquationCompatibility1}\end{equation} for all $\mathbb{C}\ni t\not= 0$. If $W$ has this property, and if no
component of $W$ is contained in the indeterminacy loci of $I$, $J$, and $K$, then so do $I(W)$, $J(W)$, and $K(W)$.

We say that an irreducible hypersurface $W$ is compatible with $\mathcal{D}^{1,2}$ if condition (\ref{EquationCompatibility1}) is satisfied and if
moreover
$$W\not\subseteq
JR_{q-1}\cup \bigcup _{(k,l)\not= (1,1),(1,2),(2,2)}\Sigma _{k,l}.$$ When $W$ is compatible, then $W$ is not contained in any of the centers of blowups
in the construction of $Z$, thus we can take its strict transform inside $Z$ and define $\mathcal{D}^{1,2}\cap W\subset Z$. Using coordinate projections
analogous to (\ref{EquationLocalCoordinateForD12}), we may also define what it means for $W$ to be compatible with $\mathcal{D}^{k,l}$ for $1\leq k<l\leq
q$. Note that both hypersurfaces $\Sigma _{1,2}$ and $I(\Sigma _{1,2})$ are compatible to $\mathcal{D}^{1,2}$.

Similarly to Proposition \ref{PropsitionBasicPropertyOfKZ2}, we obtain

\begin{proposition}
For $1\leq i<j\leq q$:

If $W$ is compatible with $\mathcal{D}^{i,j}$ and $W\not\subseteq \Sigma _{i,i}\cup \Sigma _{i,j}\cup \Sigma _{j,j}$, then ${K}_{Z}(\mathcal{D}^{i,j}\cap
W)=\mathcal{D}^{i,j}\cap {K}(W)$.

If $W=\Sigma _{i,j}$, then $K_{Z}(K_{Z}(\mathcal{D}^{i,j}\cap \Sigma _{i,j}))=\mathcal{D}^{i,j}\cap I(\Sigma _{i,j})$.

Moreover, $K_{Z}(\mathcal{D}^{i,j}\cap \Sigma _{i,j})$ can be explicitly written. For example, if $i=1,j=2$, then in the local coordinate system
(\ref{EquationLocalCoordinateForD12Second}) we have:  $K_{Z}(\mathcal{D}^{1,2}\cap \Sigma _{1,2})=\{t=\lambda =0\}$.
\label{PropsitionBasicPropertyOfKZ22}\end{proposition}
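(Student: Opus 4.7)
The plan is to mirror the proof of Proposition \ref{PropsitionBasicPropertyOfKZ2}, replacing the single-row/column structure of $\mathcal{B}^{i,i}$ by the $2\times 2$-block structure defining $\mathcal{D}^{i,j}$. The engine of the argument is an enhanced version of the homogeneity identity featured in the proof of Proposition \ref{PropsitionBasicPropertyOfKZ1}, now allowing three independent upper-left parameters: for generic data,
$$K\left (\begin{array}{lll}t^2\xi _{1,1}&t^2\xi _{1,2}&t\zeta \\t^2\xi _{1,2}&t^2\xi _{2,2}&t\zeta\\t\zeta &t\zeta &v\end{array}\right )=\left (\begin{array}{lll}t^2\xi '_{1,1}&t^2\xi '_{1,2}&t\zeta '\\t^2\xi '_{1,2}&t^2\xi '_{2,2}&t\zeta '\\t\zeta '&t\zeta '&v'\end{array}\right ),$$
with the primed entries equal to those of $K$ applied to the $t=1$ matrix. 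I would verify this identity by the same cofactor bookkeeping used to derive (\ref{EquationHomogeneousPropertyOfK}): entries of $J$ in the upper-left $2\times 2$ block scale like $1/t^2$, entries of $J$ in the $\zeta$-positions scale like $1/t$, and the cofactor expansion of $I$ restores the correct $t^2$- and $t$-powers entry by entry.

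For the first claim, a point of $\mathcal{D}^{i,j}\cap W$ corresponds, after permuting rows and columns sending $(i,j)$ to $(1,2)$, to a $t\to 0$ limit of a curve $x(t)\in W$ of the displayed block form. The compatibility condition (\ref{EquationCompatibility1}) keeps $x(t)\in W$ for small $t\neq 0$, while the hypothesis $W\not\subseteq \Sigma_{i,i}\cup\Sigma_{i,j}\cup\Sigma_{j,j}$ keeps the parameters $\xi_{1,1},\xi_{1,2},\xi_{2,2}$ generically nonzero, so the limit stays clear of the remaining indeterminacies of $K$. The extended homogeneity then exhibits $K(x(t))$ in the same block form, and reading off $\mathcal{D}^{i,j}$-coordinates gives $K_Z(\mathcal{D}^{i,j}\cap W)=\mathcal{D}^{i,j}\cap K(W)$.

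For the third claim, specialize to $W=\Sigma_{1,2}$ by setting $\xi_{1,2}=0$. The vanishing of this entry produces a $1/t$-pole in $J$ at the $(1,2)$-position that is absorbed by switching to the second chart (\ref{EquationLocalCoordinateForD12Second}) via the parameter $\lambda$. An explicit cofactor computation of $I\circ J$, in direct parallel with the proof of Proposition \ref{PropsitionBasicPropertyOfKZ} i), shows that the principal part of the image lies on $\{t=\lambda=0\}$, yielding the announced formula. The second claim then follows by feeding this explicit output back into the map: the subset $\{t=\lambda=0\}\subset\mathcal{D}^{1,2}$ is compatible with $\mathcal{D}^{1,2}$ and disjoint from the remaining forbidden $\Sigma$-hypersurfaces, so an argument analogous to that of Proposition \ref{PropsitionBasicPropertyOfKZ} i) places $K_Z(\{t=\lambda=0\})$ inside $\mathcal{D}^{1,2}\cap I(\Sigma_{1,2})$. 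The main technical obstacle throughout is organizing the cofactor bookkeeping and the chart-changes between (\ref{EquationLocalCoordinateForD12}) and (\ref{EquationLocalCoordinateForD12Second}) so that the $t$-powers and indeterminate entries align correctly; once this bookkeeping is in place, the three claims follow in sequence.
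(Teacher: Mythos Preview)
Your proposal is correct and follows essentially the same route as the paper, which simply records ``Similarly to Proposition \ref{PropsitionBasicPropertyOfKZ2}, we obtain'' without further detail. Two small remarks: the homogeneity identity you call ``enhanced'' is in fact exactly the one already displayed in the proof of Proposition \ref{PropsitionBasicPropertyOfKZ1} (the $\xi$-block there is a symmetric $2\times 2$ block with three free entries), so no new verification is needed; and your phrase ``produces a $1/t$-pole in $J$ at the $(1,2)$-position'' is slightly off, since setting $\xi_{1,2}=0$ makes the $(1,2)$-entry identically zero rather than $O(t)$---the correct picture is that as $\xi_{1,2}\to 0$ the image coordinate $\xi'_{1,2}$ diverges, which is precisely what the second chart (\ref{EquationLocalCoordinateForD12Second}) with $\lambda\to 0$ is designed to capture.
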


\section{A lower bound for $\delta ({K})$}

We will use the notation: $$S=\bigcup _{i\not= j}\mathcal{A}^{i,j}, ~U=Z\backslash S.$$ In this section we will show that instead of establishing the
property (\ref{Equation11Regular}) for $K_Z$, we can work with the restriction of $K_Z$ to the Zariski dense open subset $U$ of $Z$.

We denote by $\mathcal{I}(K_Z)$ the indeterminacy locus of $K_Z$.

\begin{lemma}
For any $n\geq 1$, and for any $1\leq i<j\leq q$:

${K}_Z^n(\mathcal{A}^{i,i})$ is a subvariety of codimension $1$ of $\mathcal{B}^{i,i}$, and is not contained in $\mathcal{I}(K_Z)\cup S$.

${K}_Z^n(\mathcal{C}^{i,j})$ is a subvariety of codimension $1$ of $\mathcal{D}^{i,j}$, and is not contained in $\mathcal{I}(K_Z)\cup S$.
\label{LemmaOrbitsOfExceptionalHypersurfacesOfKZ}\end{lemma}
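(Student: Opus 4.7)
The plan is to prove both claims simultaneously by induction on $n$, reducing each inductive step to the dynamics of the original map $K$ on $\mathcal{S}_q$. The key structural input is Proposition \ref{PropsitionBasicPropertyOfKZ} ii) (respectively Proposition \ref{PropsitionBasicPropertyOfKZ1} iv)), which tells us that $K_Z$ preserves $\mathcal{B}^{i,i}$ (respectively $\mathcal{D}^{i,j}$) and acts there as $K$ in the natural $(\xi,\zeta,v)$ coordinates; together with Propositions \ref{PropsitionBasicPropertyOfKZ2} and \ref{PropsitionBasicPropertyOfKZ22}, this lets me commute ``intersection with the exceptional divisor'' past $K_Z$ whenever one has a hypersurface of $\mathcal{S}_q$ compatible with the divisor.

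For the base case $n=1$, Propositions \ref{PropsitionBasicPropertyOfKZ} i) and \ref{PropsitionBasicPropertyOfKZ1} iii) identify $K_Z(\mathcal{A}^{i,i}) = \mathcal{B}^{i,i} \cap I(\Sigma_{i,i})$ and $K_Z(\mathcal{C}^{i,j}) = \mathcal{D}^{i,j} \cap I(\Sigma_{i,j})$, each visibly of codimension $1$ inside the relevant exceptional divisor, with generic point off $\mathcal{I}(K_Z) \cup S$ since both of the latter cut out positive-codimension subvarieties. For the inductive step in the $\mathcal{A}^{i,i}$ case, suppose $K_Z^{n-1}(\mathcal{A}^{i,i}) = \mathcal{B}^{i,i} \cap V_{n-1}$ for some hypersurface $V_{n-1} \subset \mathcal{S}_q$ compatible with $\mathcal{B}^{i,i}$ and not contained in $\Sigma_{i,i}$. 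Applying Proposition \ref{PropsitionBasicPropertyOfKZ2} gives $K_Z^n(\mathcal{A}^{i,i}) = \mathcal{B}^{i,i} \cap V_n$ with $V_n := K(V_{n-1})$, and the induction succeeds once I verify: (a) $V_n$ is still a hypersurface, (b) it is scaling-invariant in the sense of (\ref{EquationCompatibility}), and (c) it is not contained in $JR_{q-1} \cup \bigcup_{(k,l) \neq (i,i)} \Sigma_{k,l}$ nor equal to $\Sigma_{i,i}$. Property (b) propagates automatically from the homogeneous transformation rule (\ref{EquationHomogeneousPropertyOfK}), while (a) and (c) follow provided $V_{n-1}$ itself is not contained in the exceptional locus of $K$, which by Proposition \ref{PropositionBasicPropertiesOfK} a) consists of $JR_{q-1}$ and the $\Sigma_{k,l}$'s. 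The argument for $\mathcal{C}^{i,j}$ runs in parallel using Proposition \ref{PropsitionBasicPropertyOfKZ22}, starting from $V_1 = I(\Sigma_{i,j})$ with the appropriate compatibility list.

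The most delicate point, and the step I expect to be the main obstacle, is verifying at each inductive stage that $V_n$ avoids the fixed finite list of ``bad'' hypersurfaces occurring in (c). I would settle this by a direct genericity inspection together with a dimension argument: the initial hypersurface $V_1 = I(\Sigma_{i,i})$ can be checked explicitly (a generic point $x \in V_1$ has $(x^{-1})_{i,i}=0$ but no entries, no reciprocals of entries, and no further minor vanishing), and this genericity is preserved under the birational map $K$, since falling into one of the bad hypersurfaces would impose an additional closed algebraic relation that is open to fail along the full hypersurface $V_{n-1}$. Granting (a)--(c), $\mathcal{B}^{i,i} \cap V_n$ is of codimension $1$ in $\mathcal{B}^{i,i}$, and its generic point lies off $\mathcal{I}(K_Z)$ (which has codimension at least $2$ in $Z$) and off $S$ (which meets $\mathcal{B}^{i,i}$ in strictly smaller dimension), closing the induction; the corresponding statement for $\mathcal{C}^{i,j}$ closes in the same way.
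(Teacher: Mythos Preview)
Your overall architecture is correct and matches the paper: reduce to tracking the orbit of the hypersurface $V_1 = I(\Sigma_{i,i})$ (resp.\ $I(\Sigma_{i,j})$) under $K$ on $\mathcal{S}_q$, and use Propositions \ref{PropsitionBasicPropertyOfKZ2} and \ref{PropsitionBasicPropertyOfKZ22} to translate this into the orbit of $K_Z$ inside $\mathcal{B}^{i,i}$ (resp.\ $\mathcal{D}^{i,j}$). The base case and the propagation of the scaling invariance (\ref{EquationCompatibility}) are fine.

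The gap is in your justification of condition (c). You assert that ``genericity is preserved under the birational map $K$, since falling into one of the bad hypersurfaces would impose an additional closed algebraic relation that is open to fail along the full hypersurface $V_{n-1}$.'' This is not an argument: $V_{n-1}$ is itself a hypersurface, and nothing you have said rules out $V_{n-1}$ being \emph{equal} to $K^{-1}(\Sigma_{k,l})$ or $K^{-1}(JR_{q-1})$ for some $(k,l)$. Birationality of $K$ only tells you that an irreducible hypersurface maps to an irreducible hypersurface (when not exceptional); it does not tell you which one. So your induction could stall at the first $n_0$ for which $K^{n_0}(I(\Sigma_{i,i}))$ happens to coincide with some $\Sigma_{k,l}$ or with $JR_{q-1}$, and you have given no reason why such an $n_0$ does not exist.

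The paper does not try to prove that this never happens. Instead it lets $n_0$ be the first time $K^{n_0}(I(\Sigma_{1,1}))$ lands in $JR_{q-1}\cup\bigcup_{k,l}\Sigma_{k,l}$, and then uses the row/column permutation symmetries $\rho_{l,m}$ (which commute with $I$, $J$, $K$) to force $K^{n_0}(I(\Sigma_{1,1}))=\Sigma_{1,1}$: any other target would make two distinct $I(\Sigma_{k,k})$'s collide under the birational map $K^{n_0}$, a contradiction. Once the orbit hits $\Sigma_{1,1}$, the second clause of Proposition \ref{PropsitionBasicPropertyOfKZ2} shows that two further applications of $K_Z$ return you to $\mathcal{B}^{1,1}\cap I(\Sigma_{1,1})$, so the orbit is periodic and the lemma follows. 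This symmetry argument is the missing idea; without it (or some substitute, e.g.\ an explicit degree calculation excluding the bad coincidences), your step (c) is unsupported. The ``not contained in $S$'' part also needs the explicit local-coordinate description $\{t=0,\ P(\xi,\zeta,v)=0\}$ rather than a bare codimension count, since $\mathcal{I}(K_Z)\cup S$ can meet $\mathcal{B}^{i,i}$ in codimension~$1$.
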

\begin{proof}
In the following, as noted before, we assume that $q\geq 5$. We present the proof only for $\mathcal{A}^{1,1}$, since the proofs for other
$\mathcal{A}^{i,i}$'s and for $\mathcal{C}^{i,j}$'s are similar.

By Proposition \ref{PropsitionBasicPropertyOfKZ}, we know that ${K}_Z(\mathcal{A}^{1,1})=\mathcal{B}^{1,1}\cap I(\Sigma _{1,1})$. Hence from Proposition
\ref{PropsitionBasicPropertyOfKZ2}, as long as ${K}^m(I(\Sigma _{1,1}))\not\subset JR_{q-1}\cup\bigcup _{k,l}\Sigma _{k,l}$ for all $m=0,\ldots ,n$ then
${K}_{Z}^{m+1}(\mathcal{A}^{1,1})=\mathcal{B}^{1,1}\cap {K}^m(I(\Sigma _{1,1}))$, for all $m=0,\ldots ,n$. Each of these varieties is a subvariety of
codimension $1$ of $\mathcal{B}^{1,1}$, and is not contained in the indeterminacy locus of ${K}_Z$. Moreover, ${K}^m(I(\Sigma _{1,1}))$ is then
compatible to $\mathcal{B}^{1,1}$, hence $\mathcal{B}^{1,1}\cap {K}^m(I(\Sigma _{1,1}))$ is defined in the local coordinate
(\ref{EquationLocalCoordinateForB11}) by $\{t=0, P(\xi ,\zeta ,v) =0)\}$ where $P(x_{i,j})=0$ is the equation in $\mathcal{S}_q$ of ${K}^m(I(\Sigma
_{1,1}))$. From this, it is easy to see that $\mathcal{B}^{1,1}\cap {K}^m(I(\Sigma _{1,1}))$ is not contained in $\bigcup _{k\not= l}\mathcal{A}^{k,l}$.

Hence it remains to explore what happens in case ${K}^n(I(\Sigma _{1,1}))\subset JR_{q-1}\cup\bigcup _{k,l}\Sigma _{k,l}$ for some $n$. We choose $n=n_0$
to be the smallest integer satisfying ${K}^n(I(\Sigma _{1,1}))\subset JR_{q-1}\cup\bigcup _{k,l}\Sigma _{k,l}$. It is not difficult to see that $I(\Sigma
_{1,1})\not\subset JR_{q-1}\cup\bigcup _{k,l}\Sigma _{k,l}$, hence $n_0>0$, and then by definition of $n_0$:
\begin{equation}
{K}^m(I(\Sigma _{1,1}))\not\subset JR_{q-1}\cup\bigcup _{k,l}\Sigma _{k,l}, \label{EquationLemmaOrbitsOfExceptionalHypersurfacesOfKZ1}\end{equation} for
all $m=0,\ldots ,n_0-1$, and
\begin{equation}
{K}^m(I(\Sigma _{1,1}))\subset JR_{q-1}\cup\bigcup _{k,l}\Sigma _{k,l}. \label{EquationLemmaOrbitsOfExceptionalHypersurfacesOfKZ2}\end{equation}

Since $I(\Sigma _{1,1})$ is an irreducible hypersurface, ${K}$ is a birational map, and since $JR_{q-1}$ and $\Sigma _{k,l}$'s are the only exceptional
hypersurfaces of ${K}$, (\ref{EquationLemmaOrbitsOfExceptionalHypersurfacesOfKZ1}) and (\ref{EquationLemmaOrbitsOfExceptionalHypersurfacesOfKZ2}) imply
that for all $m=0,\ldots ,n_0$: ${K}^m(I(\Sigma _{1,1}))$ is an irreducible hypersurface in $\mathcal{S}_q$. Moreover, either
\begin{equation}
{K}^{n_0}(I(\Sigma _{1,1}))= JR_{q-1}, \label{EquationLemmaOrbitsOfExceptionalHypersurfacesOfKZ3}\end{equation} or
\begin{equation}
{K}^{n_0}(I(\Sigma _{1,1}))=\Sigma _{i,j}, \label{EquationLemmaOrbitsOfExceptionalHypersurfacesOfKZ4}\end{equation} for some $1\leq i,j\leq q$.

Now we show that in fact
\begin{equation}
{K}^{n_0}(I(\Sigma _{1,1}))=\Sigma _{1,1}. \label{EquationLemmaOrbitsOfExceptionalHypersurfacesOfKZ5}\end{equation}

To this end, we will use the operations $\rho _{l,m}$ defined as follows: For $1\leq l,m\leq q$, let $\rho _{l,m}:\mathcal{S}_q\rightarrow \mathcal{S}_q$
denote the matrix operation which interchanges the $l$-th and $m$-th rows, and then interchanges the $l$-th and $m$-th columns of a matrix $x\in
\mathcal{S}_q$. Observe that on the space $\mathcal{S}_q$ : $\rho _{l,m}(I(x))=I(\rho _{l,m}(x))$, $\rho _{l,m}(J(x))=J(\rho _{l,m}(x))$, and $\rho
_{l,m}({K}(x))={K}(\rho _{l,m}(x))$. In particular, $\rho _{l,m}JR_{q-1}=JR_{q-1}$.

First we rule out the possibility (\ref{EquationLemmaOrbitsOfExceptionalHypersurfacesOfKZ3}). Assume in order to reach a contradiction that
${K}^{n_0}(I(\Sigma _{1,1}))= JR_{q-1}$. Then for all $i$ we have
$${K}^{n_0}(I(\Sigma _{i,i}))= {K}^{n_0}(I(\rho _{i,1}\Sigma _{1,1}))= \rho _{i,1}{K}^{n_0}(I(\Sigma _{1,1}))= \rho _{i,1}JR_{q-1}=JR_{q-1}.$$
Hence $q$ different irreducible hypersurfaces $I(\Sigma _{1,1}),\ldots ,I(\Sigma _{q,q})$ are mapped under ${K}^{n_0}$ to the same irreducible
hypersurfaces $JR_{q-1}$. But this would be a contradiction to the fact that ${K}^{n_0}$ is birational. Thus we showed that
(\ref{EquationLemmaOrbitsOfExceptionalHypersurfacesOfKZ3}) does not occur. Hence (\ref{EquationLemmaOrbitsOfExceptionalHypersurfacesOfKZ4}) must occur.

We next show that ${K}^{n_0}(I(\Sigma _{1,1}))=\Sigma _{1,1}$. We know that ${K}^{n_0}(I(\Sigma _{1,1}))=\Sigma _{i,j}$, for some $1\leq i,j\leq q$. We
need to show that $i=j=1$. Assume in order to reach a contradiction that $i\not= 1$ or $j\not= 1$. We have two cases:

Case 1: Both $i,j\not =1$. Choose $k\not= i,j,1$, we have then:
$${K}^{n_0}(I(\Sigma _{k,k}))={K}^{n_0}(I(\rho _{k,1}\Sigma _{1,1}))=\rho _{k,1}{K}^{n_0}(I(\Sigma _{1,1}))=\rho _{k,1}\Sigma _{i,j}=\Sigma _{i,j}.$$
Hence two different irreducible hypersurfaces $I(\Sigma _{1,1})$ and $I(\Sigma _{k,k})$ have the same image $\Sigma _{i,j}$ under the birational mapping
${K}^{n_0}$, which is a contradiction.

Case 2: One of $i,j$ is $1$, but the other is not. Without loss of generality, we may assume that $i=1$ and $j\not =1$. Then
$${K}^{n_0}(I(\Sigma _{j,j}))={K}^{n_0}(I(\rho _{1,j}\Sigma _{1,1}))=\rho _{1,j}{K}^{n_0}(I(\Sigma _{1,1}))=\rho _{1,j}\Sigma _{1,j}=\Sigma _{1,j}.$$
Hence two different irreducible hypersurfaces $I(\Sigma _{1,1})$ and $I(\Sigma _{j,j})$ have the same image $\Sigma _{1,j}$ under the birational map
${K}^{n_0}$, which is again a contradiction.

Hence we showed that if $n_0>0$ is the smallest integer such that ${K}^{n_0}(I(\Sigma _{1,1}))\subset JR_{q-1}\cup\bigcup _{k,l}\Sigma _{k,l}$, then for
all $m=0,\ldots ,n_0$, ${K}^m(I(\Sigma _{1,1}))$ is an irreducible hypersurface of $\mathcal{S}_q$, and ${K}^{n_0}(I(\Sigma _{1,1}))=\Sigma _{1,1}$.
Hence by Proposition \ref{PropsitionBasicPropertyOfKZ2}, for all $m=0,\ldots ,n_0$: ${K}_Z^{m}(\mathcal{B}^{1,1}\cap I(\Sigma
_{1,1}))=\mathcal{B}^{1,1}\cap {K}^m(I(\Sigma _{1,1}))$ is a subvariety of codimension $1$ of $\mathcal{B}^{1,1}$, and such that (by Proposition
\ref{PropsitionBasicPropertyOfKZ}) ${K}_Z^{n_0+1}(\mathcal{B}^{1,1}\cap I(\Sigma _{1,1}))={K}_Z(\mathcal{B}^{1,1}\cap \Sigma _{1,1})$ is a subvariety of
codimension $1$ of $\mathcal{B}^{1,1}$. Moreover
$${K}_Z^{n_0+2}(\mathcal{B}^{1,1}\cap I(\Sigma _{1,1}))={K}_Z({K}_Z(\mathcal{B}^{1,1}\cap \Sigma _{1,1}))=\mathcal{B}^{1,1}\cap I(\Sigma _{1,1})={K}_Z(\mathcal{A}^{1,1}).$$
Hence if (\ref{EquationLemmaOrbitsOfExceptionalHypersurfacesOfKZ2}) happens, then the orbit of $K_Z(\mathcal{A}^{1,1})$ under ${K}_Z$ is periodic. Thus
the orbit of $K_Z(\mathcal{A}^{1,1})$ never lands in $\mathcal{I}(K_Z)$.

To complete the proof, we need to show that the orbit never lands in $S=\bigcup _{i\not= j}\mathcal{A}^{i,j}$. That $K_Z^{n_0}(\mathcal{B}^{1,1}\cap
I(\Sigma _{1,1}))$, which equals $\mathcal{B}^{1,1}\cap \Sigma _{1,1}$, is not contained in $S$ can be checked directly. For values $m$ when
$K_Z^{m}(\mathcal{B}^{1,1}\cap I(\Sigma _{1,1}))\not= \mathcal{B}^{1,1}\cap \Sigma _{1,1}$, we can use the argument at the end of the second paragraph of
this proof to show that $K_Z^{m}(\mathcal{B}^{1,1}\cap I(\Sigma _{1,1}))$ (which is then equal to $\mathcal{B}^{1,1}\cap K_Z^m(I(\Sigma _{1,1}))$) is not
contained in $S$ as well.
\end{proof}

By Lemma \ref{LemmaOrbitsOfExceptionalHypersurfacesOfKZ}, we obtain the following result
\begin{corollary}
If $V$ is an irreducible hypersurface which is not contained in $S$ then for any $n\geq 1$: ${K}_Z^n(V)$ is not contained in $\mathcal{I}({K}_Z)\cup
S$.\label{Corollary1}\end{corollary}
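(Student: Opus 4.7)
I propose a proof by induction on $n$. By Corollary \ref{Corollary0}, the exceptional hypersurfaces of $K_Z$ are exactly $\mathcal{A}^{i,i}$, $\mathcal{A}^{i,j}$ ($i\neq j$), and $\mathcal{C}^{i,j}$; of these, only $\mathcal{A}^{i,i}$ and $\mathcal{C}^{i,j}$ lie outside $S$, and Lemma \ref{LemmaOrbitsOfExceptionalHypersurfacesOfKZ} already controls their entire forward orbits. So the essential new content is to handle the case when $V$ is a \emph{non-exceptional} irreducible hypersurface with $V\not\subset S$, and show $K_Z(V)$ is again an irreducible hypersurface not contained in $\mathcal{I}(K_Z)\cup S$.

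Given such $V$, the fact that $K_Z$ is birational and $V$ is non-exceptional makes $K_Z(V)$ an irreducible hypersurface. Because $\mathcal{I}(K_Z)$ has codimension at least $2$, the only way the containment $K_Z(V)\subseteq \mathcal{I}(K_Z)\cup S$ can fail to be refuted is if $K_Z(V)=\mathcal{A}^{k,l}$ for some $k\neq l$. I would argue by contradiction: pushing down via $\pi_Z$ and using $\pi_Z\circ K_Z=K\circ \pi_Z$ yields $K(\pi_Z(V))\subseteq A_{k,l}$. Since $A_{k,l}$ has codimension $2q-1\geq 2$ in $\mathcal{S}_q$ and $K$ is birational, this forces either (a) $\pi_Z(V)$ to have codimension $\geq 2$, so $V$ is an exceptional divisor of $\pi_Z$; or (b) $\pi_Z(V)$ to equal an exceptional hypersurface of $K$, that is, $JR_{q-1}$ or some $\Sigma_{i,j}$ (by Proposition \ref{PropositionBasicPropertiesOfK} a)).

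In case (a), having excluded $V\subset S$ and the exceptional hypersurfaces $\mathcal{A}^{i,i},\mathcal{C}^{i,j}$ of $K_Z$, the only remaining exceptional divisors of $\pi_Z$ are $\mathcal{R}^1$, $\mathcal{B}^{i,i}$, and $\mathcal{D}^{i,j}$; but Propositions \ref{PropsitionBasicPropertyOfKZ3}, \ref{PropsitionBasicPropertyOfKZ}, and \ref{PropsitionBasicPropertyOfKZ1} show that $K_Z$ sends these respectively to the strict transform of $R_{q-1}$, to $\mathcal{B}^{i,i}$, and to $\mathcal{D}^{i,j}$, none of which equals any $\mathcal{A}^{k,l}$ with $k\neq l$. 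In case (b), the same propositions show $K_Z$ maps the strict transforms of $JR_{q-1}$, $\Sigma_{i,i}$, and $\Sigma_{i,j}$ ($i<j$) to $\mathcal{R}^1$, $\mathcal{A}^{i,i}$, and $\mathcal{C}^{i,j}$ respectively, again yielding no $\mathcal{A}^{k,l}$ with $k\neq l$. Both alternatives are contradictions.

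The inductive step then follows immediately: either $K_Z^{n-1}(V)$ is still a non-exceptional irreducible hypersurface outside $S$, in which case the base-case argument applies to produce $K_Z^n(V)$ of the same form, or the orbit has already passed through an $\mathcal{A}^{i,i}$ or $\mathcal{C}^{i,j}$, after which Lemma \ref{LemmaOrbitsOfExceptionalHypersurfacesOfKZ} handles every subsequent iterate. The main subtlety I anticipate is correctly enumerating the exceptional divisors of $\pi_Z$ and computing their $K_Z$-images, but this reduces to bookkeeping via the propositions of Section 3.
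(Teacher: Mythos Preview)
Your argument is correct. The paper itself gives no proof beyond the phrase ``By Lemma~\ref{LemmaOrbitsOfExceptionalHypersurfacesOfKZ}, we obtain the following result'', so what you have written is precisely the routine verification any reader would have to carry out: Lemma~\ref{LemmaOrbitsOfExceptionalHypersurfacesOfKZ} handles the orbits that pass through the $K_Z$-exceptional divisors $\mathcal{A}^{i,i}$ and $\mathcal{C}^{i,j}$, and for a non-exceptional irreducible hypersurface $V\not\subset S$ one only needs to rule out $K_Z(V)=\mathcal{A}^{k,l}$ with $k\neq l$, which you do by pushing down to $\mathcal{S}_q$ and invoking the explicit images computed in Propositions~\ref{PropsitionBasicPropertyOfKZ3}, \ref{PropsitionBasicPropertyOfKZ}, and~\ref{PropsitionBasicPropertyOfKZ1} together with Corollary~\ref{Corollary0}. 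This is the same approach the paper has in mind, just made explicit; there is no genuinely different idea here.
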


Let $V$ be a hypersurface (or divisor) of $Z$. We let $V|_{U}$ denote the restriction to $U$. Let $R_U(V)$ denote the "extension by zero" of $V|_{U}$ to
$Z$. We let $(K_Z^n)^*(V)$ denote the pull-back of $V$ by the map $K_Z^n$.
\begin{proposition}
If $V$ is a hypersurface on $Z$, then for all $n\geq 1$:
\begin{equation}
R_U(({K}_Z^n)^*V)=R_U(({K}_Z^n)^*R_U(V))=R_U(({K}_Z^*)^nV)=R_U(({K}_Z^*)^nR_U(V)), \label{Theorem111RegularOnOpenSet}\end{equation} as divisors on $Z$.
In particular, if $R_U(V)=0$ then for all $n\geq 1$: $R_U(({K}_Z^n)^*V)=0$. \label{Theorem11RegularOnOpenSet}\end{proposition}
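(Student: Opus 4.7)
The plan is to reduce the four-fold equality to two basic assertions: (I) for any divisor $W$ supported on $S$, both $(K_Z^n)^*W$ and $(K_Z^*)^nW$ are supported on $S$, hence killed by $R_U$; and (II) the discrepancy $(K_Z^*)^n V - (K_Z^n)^* V$ is itself supported on $S$. Granted these, the four-fold equality follows at once: for any hypersurface $V$, write $V = R_U(V) + (V - R_U(V))$ with $V - R_U(V)$ supported on $S$; assertion (I) kills the contribution of $V - R_U(V)$ to both the middle and outer expressions, and assertion (II) identifies $R_U((K_Z^n)^*V)$ with $R_U((K_Z^*)^nV)$. The ``in particular'' statement is then just the first equality applied to a $V$ with $R_U(V) = 0$.

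For the base ($n=1$) case of (I), take an irreducible summand $W = \mathcal{A}^{i,j}$ with $i \neq j$. Every component of $K_Z^* \mathcal{A}^{i,j}$ is either a strict transform $V'$ (an irreducible hypersurface with $K_Z(V') = \mathcal{A}^{i,j}$) or an exceptional contribution from an exceptional hypersurface $E$ of $K_Z$ with $K_Z(E) \subset \mathcal{A}^{i,j}$. Corollary \ref{Corollary1} forces $V' \subset S$: otherwise $K_Z(V')$ would not lie in $\mathcal{I}(K_Z) \cup S$, contradicting $K_Z(V') = \mathcal{A}^{i,j} \subset S$. By Corollary \ref{Corollary0} the exceptional hypersurfaces of $K_Z$ are the $\mathcal{A}^{i,i}$, the $\mathcal{A}^{k,l}$ ($k \neq l$), and the $\mathcal{C}^{k,l}$; of these, only the $\mathcal{A}^{k,l}$ already lie in $S$, while Propositions \ref{PropsitionBasicPropertyOfKZ} and \ref{PropsitionBasicPropertyOfKZ1} together with Lemma \ref{LemmaOrbitsOfExceptionalHypersurfacesOfKZ} confine the images of $\mathcal{A}^{i,i}$ and $\mathcal{C}^{k,l}$ to $\mathcal{B}^{i,i}$ and $\mathcal{D}^{k,l}$ respectively, preventing any inclusion in $\mathcal{A}^{i,j}$. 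Thus $K_Z^* \mathcal{A}^{i,j}$ is supported on $S$. Iterating this base case (and invoking (II) to align $(K_Z^n)^*$ with $(K_Z^*)^n$) gives the full statement (I).

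For (II), induct on $n$ via the telescoping identity
\[
(K_Z^*)^{n+1} V - (K_Z^{n+1})^* V = K_Z^* \bigl((K_Z^*)^n V - (K_Z^n)^* V\bigr) + \bigl(K_Z^* (K_Z^n)^* V - (K_Z^{n+1})^* V\bigr).
\]
By the inductive hypothesis the first summand is $K_Z^*$ of a divisor supported in $S$, hence in $S$ by (I). The second summand is the classical Fornaess--Sibony composition defect arising from resolving $K_Z^{n+1} = K_Z \circ K_Z^n$; it is supported on those exceptional hypersurfaces $E$ of $K_Z$ whose forward image meets the indeterminacy locus of $K_Z^n$ along a component of $V$. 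Lemma \ref{LemmaOrbitsOfExceptionalHypersurfacesOfKZ} ensures that $\mathcal{A}^{i,i}$ and $\mathcal{C}^{k,l}$ produce no defect, because their full forward orbits avoid $\mathcal{I}(K_Z) \cup S$; the only remaining exceptional hypersurfaces are the $\mathcal{A}^{k,l}$ with $k \neq l$, which themselves lie in $S$.

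The main technical obstacle is making the composition defect precise on a simultaneous resolution of $K_Z^n$ and $K_Z$, tracking each prime divisor that appears after pushforward from the resolution back to $Z$. Once this bookkeeping is in place, Lemma \ref{LemmaOrbitsOfExceptionalHypersurfacesOfKZ} and Corollary \ref{Corollary1} provide exactly the orbit information needed to confine every correction to $S$, and the induction closes cleanly.
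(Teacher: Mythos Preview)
Your proposal is correct and rests on the same mechanism as the paper's proof: every discrepancy among the four divisors is supported on hypersurfaces whose $K_Z$-orbit meets $\mathcal{I}(K_Z)\cup S$, and Corollary~\ref{Corollary1} forces any such hypersurface to lie in $S$, hence be annihilated by $R_U$. The paper compresses this into a single sentence---asserting that the differences are supported in $K_Z^{-j}(\mathcal{I}(K_Z)\cup S)$ and invoking Corollary~\ref{Corollary1}---whereas you unpack it into the explicit (I)/(II) decomposition with an inductive treatment of the composition defect; your version is more detailed but not a different argument.
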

\begin{proof}
Before applying $R_U$ on the left, the difference between any two of the divisors in equation (\ref{Theorem111RegularOnOpenSet}) is a hypersurface
supported in $K_Z^{-j}(\mathcal{I}(K_Z)\cup S)$. However, by Corollary \ref{Corollary1}, this last set is disjoint from $U$, hence the difference
vanishes on applying $R_U$.
\end{proof}

Define $\Lambda :=Pic(Z)/ker (R_U)$, and let $pr_{\Lambda}:Pic(Z)\rightarrow \Lambda$ be the canonical projection. By Proposition
\ref{Theorem11RegularOnOpenSet}, the maps $pr_{\Lambda}\circ ({K}_Z^n)^*:Pic(Z)\rightarrow \Lambda$ induce well-defined maps $L_n:\Lambda\rightarrow
\Lambda$ which satisfy the identities: $L_n=(L_1)^n$ for all $n\geq 1$.

\begin{theorem}
$\delta ({K})\geq sp(L_1)$, where $sp(L_1)$ is the spectral radius of $L_1$. \label{TheoremLowerBoundForLambda1}\end{theorem}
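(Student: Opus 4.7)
The plan is to express $\deg({K}^n)$ as a pairing in $\Lambda$ between a fixed linear functional and $L_1^n$ applied to a fixed class, and then to use positivity of $L_1$ with respect to the image of the effective cone in $\Lambda$ to extract the spectral radius lower bound.

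First I would choose a generic projective line $\ell \subset \mathcal{S}_q$ which avoids both the centers of all the blowups used to construct $Z$ and the indeterminacy locus of $K^n$ for every $n\geq 0$; the latter is a countable union of proper subvarieties, so a generic $\ell$ works. Its strict transform $\tilde{\ell}\subset Z$ is then isomorphic to $\ell$, disjoint from every $\pi_Z$-exceptional divisor, and in particular contained in $U$. By the projection formula for $\pi_Z$,
$$
\deg({K}^n) \;=\; ({K}^n)^* H \cdot [\ell] \;=\; ({K}_Z^n)^* H_Z \cdot [\tilde{\ell}],
$$
where $H_Z := \pi_Z^* H$.

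Next, because $\tilde{\ell}\subset U$, the intersection number $D\cdot [\tilde{\ell}]$ vanishes for every $D$ supported on $S$, so the pairing $D\mapsto D\cdot [\tilde{\ell}]$ factors through $pr_{\Lambda}:Pic(Z)\to \Lambda$ to give a linear functional $\psi_{\ell}:\Lambda\to \mathbb{Z}$. Combining with Proposition \ref{Theorem11RegularOnOpenSet}, which yields $pr_{\Lambda}(({K}_Z^n)^* H_Z)=L_1^n\,pr_{\Lambda}H_Z$, I obtain the key identity
$$
\deg({K}^n) \;=\; \psi_{\ell}\bigl(L_1^n\,pr_{\Lambda}H_Z\bigr).
$$

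It remains to show the right-hand side grows at least like $sp(L_1)^n$. For this I would apply a Perron-Frobenius (Krein-Rutman) argument to the cone $C:=pr_{\Lambda}(\mathrm{Eff}(Z))\subset \Lambda\otimes\mathbb{R}$. This cone is $L_1$-invariant, since the pullback of an effective class is effective; it is pointed, because any element of $\ker R_U\cap \mathrm{Eff}(Z)$ is represented by a divisor supported on $S$ and thus projects to $0$; and it has nonempty interior since $pr_{\Lambda}H_Z$ is the image of an ample class. Moreover $\psi_{\ell}$ is strictly positive on $C\setminus\{0\}$ for generic $\ell$, because any effective divisor with nonzero $\Lambda$-class has a component not contained in $S$ that meets $\ell$ in a positive number of points. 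Krein-Rutman then produces a Perron eigenvector $v\in C$ with $L_1v=sp(L_1)v$, along which $pr_{\Lambda}H_Z$ has a positive component, whence
$$
\deg({K}^n) \;=\; \psi_{\ell}\bigl(L_1^n\,pr_{\Lambda}H_Z\bigr)\;\geq\; c\cdot sp(L_1)^n
$$
for some $c>0$ independent of $n$, and taking $n$-th roots gives $\delta({K})\geq sp(L_1)$.

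The main obstacle I anticipate is the Perron-Frobenius step: one must verify carefully that $C$ is closed and proper, that the Perron eigenvector is not annihilated by $\psi_{\ell}$, and that $pr_{\Lambda}H_Z$ has a nonzero component in its direction (a subtlety if $sp(L_1)$ has higher geometric multiplicity or is matched in modulus by other spectrum). An alternative that bypasses the cone analysis is to bound each matrix entry $(L_1^n)_{ij}$ in a fixed integral basis of $\Lambda$ by a constant multiple of $\deg({K}^n)$, using the intersection description above applied to a basis of test curve classes in $U$; since $sp(L_1)=\lim_n \|L_1^n\|^{1/n}$ in any norm, such a bound yields the same conclusion $\delta({K})\geq sp(L_1)$.
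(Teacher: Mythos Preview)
Your reduction to the identity $\deg(K^n)=\psi_\ell\bigl(L_1^n\,pr_\Lambda H_Z\bigr)$ is fine, but the Perron--Frobenius step contains a concrete error, not just the vague subtleties you flag. You assert that $\psi_\ell$ is strictly positive on $C\setminus\{0\}$ because ``any effective divisor with nonzero $\Lambda$-class has a component not contained in $S$ that meets $\ell$ in a positive number of points.'' This is false for the exceptional divisors themselves: $\mathcal{R}^1$, $\mathcal{A}^{i,i}$, $\mathcal{B}^{i,i}$, $\mathcal{C}^{i,j}$, $\mathcal{D}^{i,j}$ are all effective with nonzero image in $\Lambda$, yet a generic line $\tilde\ell$ is disjoint from every $\pi_Z$-exceptional divisor, so $\psi_\ell$ vanishes on each of them. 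Hence $\psi_\ell$ sits on the boundary of $C^*$, the domination $\psi_\ell\geq c\,\psi^*$ by the dual Perron functional need not hold, and nothing prevents the primal Perron eigenvector from lying in the span of exceptional classes where $\psi_\ell$ is zero. Your fallback of testing against a basis of curves in $U$ could be made to work, but it requires producing curves inside $U$ that pair nontrivially with each exceptional class and then controlling the resulting intersection numbers by $\deg(K^n)$; this is real additional work you have not carried out.

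The paper avoids all of this with a two-line norm argument. Since $\pi_Z$ is birational one has $\delta(K)=\delta(K_Z)=\lim_n\|(K_Z^n)^*\|_{Pic(Z)}^{1/n}$ for any norm on $Pic(Z)$. Because $L_n$ is precisely the operator induced by $(K_Z^n)^*$ on the quotient $\Lambda=Pic(Z)/\ker(R_U)$, the quotient norm satisfies $\|L_n\|_\Lambda\le\|(K_Z^n)^*\|_{Pic(Z)}$, and Proposition~\ref{Theorem11RegularOnOpenSet} gives $L_n=L_1^n$. Therefore
\[
\delta(K)=\lim_n\|(K_Z^n)^*\|_{Pic(Z)}^{1/n}\;\ge\;\lim_n\|L_1^n\|_\Lambda^{1/n}\;=\;sp(L_1),
\]
with no effective cone, no Krein--Rutman, and no test curves required.
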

\begin{proof}
The dynamical degree $\delta (K_Z)=\lim _{n\rightarrow \infty}||(K_Z^n)^*||^{1/n}$ is independent of the choice of norm $||.||_{Pic(Z)}$ on $Pic(Z)$.
Further, since $\pi _Z $ is a birational map, we have that $\delta (K_Z)=\delta (K)$ (see for example \cite{dinh-sibony}, and see \cite{dinh-nguyen} for
more general results). Finally, if we use the induced norm on $\Lambda$, we have
\begin{eqnarray*}
\lim _{n\rightarrow\infty}||({K}_Z^n)^*||_{Pic(Z)}^{1/n} \geq\lim _{n\rightarrow\infty}||L_n||_{\Lambda}^{1/n}=\lim
_{n\rightarrow\infty}||(L_1)^n||_{\Lambda}^{1/n}=sp(L_1).
\end{eqnarray*}

\end{proof}

\section{The spectral radius of $L_1$}
A basis for the Picard group $Pic(Z)$ is given by $H$ (the class of a generic hyperplane in $\mathcal{S}_q$), and the classes of the strict transforms of
$\mathcal{R}^1$, $\mathcal{A}^{i,i}$'s ($1\leq i\leq q$), $\mathcal{B}^{i,i}$'s ($1\leq i\leq q$), $\mathcal{A}^{i,j}$'s ($1\leq i<j\leq q$),
$\mathcal{C}^{i,j}$'s ($1\leq i<j\leq q$), and $\mathcal{D}^{i,j}$'s ($1\leq i<j\leq q$). The images under $pr_{\Lambda}$ of classes of $H$ and of the
strict transforms of $\mathcal{R}^1$, $\mathcal{A}^{i,i}$ ($1\leq i\leq q$), $\mathcal{B}^{i,i}$ ($1\leq i\leq q$), $\mathcal{C}^{i,j}$ ($1\leq i<j\leq
q$), and $\mathcal{D}^{i,j}$ ($1\leq i<j\leq q$) form a basis for $\Lambda$. For convenience, we will use the same letters to denote the images of these
classes in $\Lambda$. Further, we define
\begin{equation}
\mathcal{A}=\sum _{i}\mathcal{A}^{i,i},~\mathcal{B}=\sum _{i}\mathcal{B}^{i,i},~\mathcal{C}=2\sum _{i<j}\mathcal{C}^{i,j},~ \mathcal{D}=2\sum
_{i<j}\mathcal{D}^{i,j}.\label{EquationBasisOfLambdaZero}
\end{equation}

Let $\Lambda _0$ be the subspace of $\Lambda$ generated by the ordered basis $H,~\mathcal{R}^1,~\mathcal{A},~\mathcal{B},~\mathcal{C}$ and $\mathcal{D}$.

\begin{lemma}
The map $L_1$ restricted to $\Lambda _0$ is given by

\begin{eqnarray*}
L_1(H)&=&(q^2-q+1)H-(q-2)\mathcal{R}^1-(2q-3)\mathcal{A}-(2q-2)\mathcal{B}-(2q-3)\mathcal{C}-(2q-2)\mathcal{D},\\
L_1(\mathcal{R}^1)&=&(q^2-q)H-(q-1)\mathcal{R}^1-(2q-3)\mathcal{A}-(2q-2)\mathcal{B}-(2q-3)\mathcal{C}-(2q-2)\mathcal{D},\\
L_1(\mathcal{A})&=&qH-\mathcal{A}-2\mathcal{B}-2\mathcal{C}-2\mathcal{D},\\
L_1(\mathcal{B})&=&\mathcal{A}+\mathcal{B},\\
L_1(\mathcal{C})&=&(q^2-q)H-(2q-2)\mathcal{A}-(2q-2)\mathcal{B}-(2q-3)\mathcal{C}-(2q-2)\mathcal{D},\\
L_1(\mathcal{D})&=&\mathcal{C}+\mathcal{D}.
\end{eqnarray*}
In particular, $\Lambda _0$ is invariant under $L_1$, and the spectral radius of $L_1|\Lambda _0$ is the largest root of the polynomial $\lambda
^2-(q^2-4q+2)\lambda +1$. \label{LemmaTheMapL}\end{lemma}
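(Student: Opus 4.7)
The plan is to compute $L_1$ on each of the six generators of $\Lambda_0$ one at a time as classes in $\mathrm{Pic}(Z)$, then project to $\Lambda = \mathrm{Pic}(Z)/\ker R_U$, where by construction each off--diagonal class $\mathcal{A}^{i,j}$ with $i\neq j$ is killed since $\mathcal{A}^{i,j}\subset S$. The row/column permutations $\rho_{l,m}$ commute with $K$ (as recalled in the proof of Lemma \ref{LemmaOrbitsOfExceptionalHypersurfacesOfKZ}), so every symmetric sum $\mathcal{A}, \mathcal{B}, \mathcal{C}, \mathcal{D}$ is sent by $L_1$ to another symmetric combination; this automatically guarantees $L_1$--invariance of $\Lambda_0$, and the remaining task is to read off the coefficients.

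For $L_1(H)$ I would use $\pi_Z\circ K_Z = K\circ \pi_Z$ and the degree formula of Proposition \ref{PropositionBasicPropertiesOfK}(c), so that the principal term is $(q^2-q+1)H$; the corrections $-(q-2)\mathcal{R}^1-(2q-3)\mathcal{A}-\cdots$ are the multiplicities of a generic preimage $K^{-1}(H_{\mathcal{S}_q})$ along the successive blowup centers $R_1, A_{i,j}, A_{i,i}, B_{i,i}, C_{i,j}, D_{i,j}$. These are extracted by substituting the local coordinate formulas (\ref{EquationLocalCoordinateForR1})--(\ref{EquationLocalCoordinateForD12Second}) into the cofactor expression $\widehat K_{i,j}(y)=C_{i,j}(1/\widehat y)\prod(\widehat y)$ from the proof of Proposition \ref{PropositionBasicPropertiesOfK}(c) and reading off the leading order in the blowup parameter ($s$ or $t$) after clearing the monomial factor. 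The five pullbacks of the exceptional generators come instead from the explicit behavior of $K_Z$ on exceptional divisors in Propositions \ref{PropsitionBasicPropertyOfKZ3}, \ref{PropsitionBasicPropertyOfKZ}, and \ref{PropsitionBasicPropertyOfKZ1}: for instance $K_Z(\mathcal{B}^{i,i})=\mathcal{B}^{i,i}$ with $K_Z|\mathcal{B}^{i,i}$ being another copy of $K$ produces the self--term in $L_1(\mathcal{B})=\mathcal{A}+\mathcal{B}$, while $K_Z(\mathcal{A}^{i,i})\subset\mathcal{B}^{i,i}$ supplies the $+\mathcal{A}$ contribution, and Proposition \ref{PropsitionBasicPropertyOfKZ1}(iii)--(iv) gives the analogous statement for the $\mathcal{C}$--$\mathcal{D}$ pair.

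The main obstacle is the multiplicity bookkeeping for $L_1(H)$: the precise coefficients $q-2$, $2q-3$, $2q-2$ depend on how the $(q-1)\times(q-1)$ cofactor determinants of $1/\widehat y$ degenerate along the rank--deficient locus $R_1$ and along the coordinate--hyperplane intersections $A_{i,j}, B_{i,i}, C_{i,j}, D_{i,j}$, and since the centers $B_{i,i}, C_{i,j}, D_{i,j}$ live in successive strict transforms rather than in $\mathcal{S}_q$ itself, the multiplicities have to be tracked iteratively through the six charts.

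Once the $6\times 6$ matrix of $L_1|\Lambda_0$ is in hand, the block relations $L_1(\mathcal{B})=\mathcal{A}+\mathcal{B}$ and $L_1(\mathcal{D})=\mathcal{C}+\mathcal{D}$ together with the near--coincidence among the $\mathcal{A}$, $\mathcal{B}$, $\mathcal{C}$, $\mathcal{D}$ rows of the columns $L_1(H),L_1(\mathcal{R}^1),L_1(\mathcal{C})$ force the characteristic polynomial to factor as a product of factors of modulus at most $1$ with a single quadratic factor $\lambda^2-(q^2-4q+2)\lambda+1$, which I would verify by elementary column operations followed by direct expansion. Since this quadratic has constant term $1$, its two roots are reciprocal, so the larger one is $\mathrm{sp}(L_1|\Lambda_0)$ as claimed.
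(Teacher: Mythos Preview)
Your proposal is correct and follows essentially the same route as the paper: exploit the $\rho_{l,m}$--symmetry to reduce to one coefficient per exceptional class, then compute those coefficients for $L_1(H)$ by substituting the local coordinate projections (\ref{EquationLocalCoordinateForR1})--(\ref{EquationLocalCoordinateForD12}) into the homogeneous representation $\widehat{K}_{i,j}(x)=C_{i,j}(1/x)\prod(x)$ and reading off the order of vanishing in the blowup parameter; the paper carries this out in detail only for $b=q-2$ and $\alpha=2q-3$ (citing Proposition~6.1 of \cite{bedford-tuyen} for the analogous computation) and simply states the remaining values, while you additionally sketch how the formulas $L_1(\mathcal{B})=\mathcal{A}+\mathcal{B}$ and $L_1(\mathcal{D})=\mathcal{C}+\mathcal{D}$ follow from Propositions~\ref{PropsitionBasicPropertyOfKZ} and~\ref{PropsitionBasicPropertyOfKZ1}, and how the quadratic factor emerges from the characteristic polynomial.
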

\begin{proof}
The proof is similar to the proof of Proposition 6.1 in \cite{bedford-tuyen}. For example, we determine $L_1(H)$. There are integers $a,~b,~\alpha
_{i,i},~\beta _{i,i},~\gamma _{i,j}$ and $\lambda _{i,j}$ such that
\begin{eqnarray*}
L_1(H)&=&a H-b\mathcal{R}^1-\sum _{1\leq i\leq q}\alpha _{i,i}\mathcal{A}^{i,i}\\&&-\sum _{1\leq i\leq q}\beta _{i,i}\mathcal{B}^{i,i}-\sum _{1\leq
i<j\leq q}\gamma _{i,j}\mathcal{C}^{i,j}-\sum _{1\leq i<j\leq q}\lambda _{i,j}\mathcal{D}^{i,j}.
\end{eqnarray*}

By symmetry, there are constants $\alpha ,~\beta ,~\gamma$ and $~\lambda$ such that $\alpha _{i,i}=\alpha ,~\beta _{i,i}=\beta ,~\gamma _{i,j}=\gamma $
and $\lambda _{i,j}=\lambda $ for all $1\leq i<j\leq q$. Thus
\begin{eqnarray*}
L_1(H)=aH-b\mathcal{R}^1-\alpha \mathcal{A}-\beta \mathcal{B}-\frac{1}{2}\gamma \mathcal{C}-\frac{1}{2}\lambda \mathcal{D}.
\end{eqnarray*}

Recall from Proposition \ref{PropositionBasicPropertiesOfK} that the homogeneous form of $K$ is
\begin{eqnarray*}
\widehat{K}_{i,j}(x)=C_{i,j}(1/{x})\prod ({x}),
\end{eqnarray*}
where $x=(x_{k,l})_{1\leq k,l\leq q}\in \mathcal{S}_q$.

The coefficient $a$ is the degree of $K$, so by Proposition \ref{PropositionBasicPropertiesOfK}, we have $a=q^2-q+1$. To find the other coefficients, we
let $H=\{l=0\}$ where $l=\sum c_{i,j}x_{i,j}$, and we determine the order of vanishing of $\widehat{K}\circ l$ at the various divisors.

The constant $b$ is the order of vanishing of $\widehat{K}\pi _{\mathcal{R}^1}(s,v ,\nu )$ in $s$, where $\pi _{\mathcal{R}^1}$ is given in
(\ref{EquationLocalCoordinateForR1}). For $\nu =(\nu _1,\ldots ,\nu _q)$ with $\nu _1\ldots \nu _q\not= 0$, $\prod (\pi _{\mathcal{R}^1}(s,v ,\nu ))\not=
0$ when $s=0$. Further
$$\frac{1}{\pi _{\mathcal{R}^1}(s,v ,\nu )}=\frac{1}{\nu}\otimes \frac{1}{\nu}+O(s).$$
Since $\frac{1}{\nu}\otimes \frac{1}{\nu}$ has rank $1$, $C_{i,j}(1/\pi _{\mathcal{R}^1}(s,v ,\nu ))=O(s^{q-2})$. Thus $b=q-2$.

The constant $\alpha$ is the order of vanishing of $\widehat{K}\pi _{\mathcal{A}^{1,1}}(s,\zeta  ,v )$ in $s$, where $\pi _{\mathcal{A}^{1,1}}$ is given
in (\ref{EquationLocalCoordinateForA11}). The order of vanishing of $\prod (\pi _{\mathcal{A}^{1,1}}(s,\zeta ,v ))$ in $s$ is $2q-1$, since only the
entries on the first row and first column of the matrix $\pi _{\mathcal{A}^{1,1}}(s,\zeta ,v )$ vanish when $s=0$, and moreover all of these entries
vanishes to order $1$ in $s$. The minimal order of vanishing of $C_{i,j}(1/(\pi _{\mathcal{A}^{1,1}}(s,\zeta ,v )))$ ($1\leq i,j\leq q$) in $s$ is $-2$,
since $C_{i,j}(1/(\pi _{\mathcal{A}^{1,1}}(s,\zeta ,v )))$ is a sum whose summands are of the form $\pm \sigma  _1\sigma  _2\ldots \sigma _{q-1}$, where
$\sigma _{i}$ are entries of $1/\pi _{\mathcal{A}^{1,1}}(s,\zeta ,v ))$ and not any two of them are from a same row or column. Thus $\alpha =2q-3$.

The constants $\beta =2q-2$, $\gamma =4q-6$, and $\lambda =4q-4$ are similarly determined. Hence $L_1(H)$ is as in the statement of the lemma.
\end{proof}

\textit{Proof of Theorem \ref{theorem1}}: By Theorem \ref{TheoremLowerBoundForLambda1} and Lemma \ref{LemmaTheMapL}, we have $\delta ({K})\geq
sp(L_1)\geq sp(L_1|\Lambda _0)=$ the largest root of the polynomial $\lambda ^2-(q^2-4q+2)\lambda +1$. Because the degree complexity of the matrix
inversion restricted to $\mathcal{S}_q$ is not larger than that of the general matrices, and since the value of the later is equal to the largest root of
the polynomial $\lambda ^2-(q^2-4q+2)\lambda +1$ (see \cite{bedford-tuyen}), we conclude that $\delta ({K})=$ the largest root of the polynomial $\lambda
^2-(q^2-4q+2)\lambda +1$.

% Set the ending of a LaTeX document

\end{document}